\newtheorem{theorem}{Theorem}[section]
\newtheorem{lemma}{Lemma}[section]
\newtheorem{remark}{Remark}[section]
\newtheorem{definition}{Definition}[section]
\newtheorem{corollary}{Corollary}[section]
\newcommand{\R}{\mathbb{R}}
\newcommand{\C}{\mathbb{C}}
\newcommand{\N}{\mathbb{N}}
\newcommand{\LL}{\mathscr{L}}
\newcommand{\mv}{\bar{\varphi}} 
\newcommand{\ml}{\bar{\lambda}} 
\newcommand{\ev}{z} 
\newcommand{\evt}{\xi} 
\newcommand{\tv}{\psi} 
\newcommand{\rv}{f} 
\newcommand{\sad}{\bar{\it s}}
\newcommand{\sA}{\sigma_{A}}
\newcommand{\sB}{\sigma_{B}}
\newcommand{\dd}{~{\rm d}}
\newcommand{\for}{\quad{\rm for}~}
\newcommand{\tad}{\pmb{t}}
\newcommand{\Co}{C^1}    
\newcommand{\Cz}{C}
\newcommand{\U}{\mathscr{A}}  
\newcommand{\X}{X}  
\newcommand{\Y}{Y}
\newcommand{\HH}{\mathcal{H}}
\newcommand{\Nm}{\mathcal{H}_\perp}
\newcommand{\dE}{E_{\delta}}
\newcommand{\dF}{\mathscr{F}_{\delta}}
\newcommand{\mvp}{\mv_{\delta}}
\newcommand{\OA}{A}
\title{Stability of the Minimum Energy Path\thanks{
This work was supported by the National Key R\&D Program of China (No. 2020YFA0712900). HC's work was also supported by National Natural Science Foundation of China (No. NSFC11971066).
}
}
\author{
Xuanyu Liu\footnote{
    {\tt xyliu9535@mail.bnu.edu.cn}.
	School of Mathematical Sciences, Beijing Normal University, Beijing 100875, China.},
~ Huajie Chen\footnote{
    {\tt chen.huajie@bnu.edu.cn}.
	School of Mathematical Sciences, Beijing Normal University, Beijing 100875, China.}
~and~ Christoph Ortner\footnote{
    {\tt ortner@math.ubc.ca}.
	Department of Mathematics, University of British Columbia, 1984 Mathematics Road, Vancouver, BC, Canada V6T 1Z2.}
}
\date{}
\begin{document}
\maketitle
 
\begin{abstract}
The minimum energy path (MEP) is the most probable transition path that connects two equilibrium states of a potential energy landscape.
It has been widely used to study transition mechanisms as well as transition rates in the fields of chemistry, physics, and materials science. 
In this paper, we derive a novel result establishing the stability of MEPs under perturbations of the energy landscape. The result also represents a crucial step towards studying the convergence of various numerical approximations of MEPs, such as the nudged elastic band and string methods.
\end{abstract}


\section{Introduction}
\label{sec:introduction}
\setcounter{equation}{0}
The long term evolution of physical systems is often characterised by rare transitions between energy minima on a potential energy landscape. 
Within transition state theory, the leading-order contribution to the transition rate is the energy barrier between the minima.
One of the most popular methods for finding the energy barrier is to search for the minimum energy path (MEP) of the transition (see the review article \cite{2002surveyMEP}). This path is also interesting in its own right in that it provides modellers with insights into transition mechanisms. 
The MEP can be viewed as the most probable path of the transition between the minima.
The energy barrier (at the saddle) along the MEP path can then be used to calculate the transition rate by using approximations such as harmonic transition state theory \cite{berglund2013kramers,hanggi1990reaction,vineyard1957frequency}.
The most widely used techniques for finding the MEP are the nudged elastic band (NEB) method \cite{2000climbingNEB,jonsson1998NEB} and the string method \cite{2002string,2007string,2013climbingstring}.
They both iteratively evolve a discretised path of images in projected steepest descent directions, while keeping a smooth distribution of the images along the path
(see e.g. \cite{branduardi2013string,gardner2019energetics,NEBApp1,NEBApp3}).

To understand the rationale of modeling, the efficiency and the accuracy of numerical algorithms from a theoretical point of view, the stability of MEPs plays an important role. 
In particular, it is usually required that a small perturbation (or a good approximation) of the energy landscape does not lead to a large change in the MEP.
In spite of the importance of the stability result, there was very limited work on this aspect.
In \cite{cameron2011string}, the authors investigated the MEP from a dynamical systems point of view and studied the evolution of the path by the string method, establishing that the limiting curve is indeed MEP under certain conditions. 
In \cite{2018LuskinStabString}, the authors showed that the MEP is uniformly and asymptotically stable is the sense that any curve near the MEP can be arbitrarily close to it in the Hausdorff distance under the gradient decent dynamics with long enough evolution time.
Based on this stability, they also proved the convergence of the simplified and improved string method with respect to the time step and number of images on the curve. 
Neither of these stability results are strong enough to derive convergence \emph{rates}.

The purpose of this paper is to derive a ``strong" stability result for the MEP.
The novelty of our approach lies in that, we reformulate the MEP as the the root of an abstract operator, which is carefully constructed such that its linearisation becomes an isomorphism between appropriate function spaces.
With this approach, we can derive the stability that enables us to utilize powerful generic perturbation results such as the implicit function theorem to study the MEP.
In particular, it implies that a small perturbation of the energy landscape will lead to only a small deviation of the MEP, which shed lights on the understanding of both theoretical and numerical aspects for the MEP. Our analysis also lies the foundation for our approximation error analysis in \cite{dMEPconv}.

\vskip 0.1cm

\textbf{Outline.} 
The rest of this paper is organized as follows. 
In Section \ref{sec:results}, we present the main results of this paper, including the stability of the MEP, the intuition behind our construction, and an important application as corollary.
In Section \ref{sec:mep}, we present detailed proofs for the stability result, and provide a remark showing the necessity of the assumptions.
In Section \ref{sec:app}, we prove the corollary by using the stability result.
In Section \ref{sec:conclusion}, we give some conclusions.

\vskip 0.1cm

\textbf{Notations.}  
Let $X$ and $Y$ be Banach spaces with the norm $\|\cdot\|_X$ and $\|\cdot\|_Y$ respectively.
We will denote by $\LL(X,Y)$ the Banach space of all linear bounded operators from $X$ to $Y$ with the operator norm $\lVert\cdot\rVert_{\LL(X,Y)}$.
For a given functional $\mathscr{F}\in C^2(X)$ and $x\in X$, we will denote its first variation and second variation by $\delta\mathscr{F}(x)$ and $\delta^2\mathscr F(x)$, respectively. 
Let $\HH$ be a Hilbert space with the norm $|\cdot|$.
For $b>a$, we denote by $C\left([a,b];\HH\right)$ the space of continuous curves in $\HH$ with the norm $\lVert\varphi\rVert_{C([a,b];\HH)} := \sup_{\alpha\in[a,b]} |\varphi(\alpha)|$; 
and $C^1\left([a,b];\HH\right)$ the space of continuously differentiable curves with the norm $\lVert\varphi\rVert_{\Co([a,b];\HH)} := \lVert\varphi\rVert_{C([a,b];\HH)} + \lVert \varphi' \rVert_{C([a,b];\HH)}$. 
For a functional $E \in C^2(\HH)$ and $y\in\HH$, we will denote the gradient by $\nabla E(y)$, i.e., the Riesz-representer of the first variation $\delta E(y)$. 
And we will denote by $\nabla^2 E(y):\HH\rightarrow\HH$ the Hessian of $E$ (i.e. the Jacobian of $\nabla E$), with the inner product $(\nabla^2 E(y)x_1,x_2)$ being the Riesz representation of the second variation $\langle\delta^2 E(y)x_1,x_2\rangle$ for $x_1,x_2\in\HH$.
We will use $C$ to denote a generic positive constant that may change from one line to the next.
The dependencies of $C$ on model parameters (in our context, the energy landscape) will normally be clear from the context or stated explicitly.



\section{Main results}
\label{sec:results}
\setcounter{equation}{0}

\subsection{The Minimum energy path and its stability}
\label{sec:mep_s}

Let $E :\HH\rightarrow \R$ be a potential energy functional, where the configuration space $\HH$ is a Hilbert space with the inner product $(\cdot,\cdot)$ and the norm $|\cdot| := \sqrt{(\cdot,\cdot)}$. 
A configuration $y\in\HH$ could encode an atomic configuration, a crystalline structure, a phase field, and many other examples.
Throughout this paper, we will assume that $E \in C^3(\HH)$.
This regularity assumption is required because we need the Hessian $\nabla^2 E$ to be $C^1$ in our analysis.

Given an energy functional $E$, we call $y\in\HH$ a critical point if $\nabla E(y) = 0$. 
We call a critical point $y$ a strong local minimizer if the Hessian $\nabla^2 E(y)\in\LL(\HH)$ is positive definite, which means 
there is a positive constant $C>0$ such that
\begin{equation}
    \big( \nabla^2E(y) u,u\big) \geq C |u|^2\qquad\forall~u\in\HH.
\end{equation}
We call a critical point $y$ an index-1 saddle point if $\nabla^2 E(y)$ has exactly one negative eigenvalue while the rest spectrum are positive.
That is to say, there exist $\lambda_1<0$ and $v_1\in\HH$ such that
\begin{equation}
    \left\{\begin{aligned}
    &\nabla^2E(y) v_1 = \lambda_1 v_1, \quad{\rm and}
    \\[1ex]
    &\big( \nabla^2E(y) u,u\big) \geq C |u|^2\qquad\forall \quad u\in\HH, \quad (u,v_1) = 0. 
    \end{aligned}\right.
\end{equation}
For the sake of brevity, we will omit the qualifiers ``strong" and ``index-1" and simply say ``local minimizer" and ``saddle point".
We assume throughout that $E$ has at least two local minimizers on the energy landscape, denoted by $y^A_M\in\HH$ and $y^B_M\in\HH$ respectively.

A minimum energy path (MEP) is a curve $\varphi\in C^1\big([0, 1];\HH\big)$ connecting $y^A_M$ and $y^B_M$ whose tangent is everywhere parallel to the gradient except at the critical points. 
To give a rigorous definition of the MEP, we first introduce the projection operators $P_v~,P_v^{\perp}:\HH\rightarrow\HH$, given a $v\in\HH \setminus \{0\}$, 
\begin{equation*}
P_v y := \Big(y,\frac{v}{|v|}\Big) \frac{v}{|v|}
\qquad{\rm and}\qquad
P^\perp_{v}  := I - P_v ,
\end{equation*}
where $I$ is the identity operator.
We first define the admissible class for 
``regular" curves connecting the minimizers $y^A_M$ and $y^B_M$ as
\begin{align*}
\U := \Big\{ \varphi \in C^1\big( [0,1];\HH \big) :~  \varphi(0) = y^A_M,~\varphi(1) = y^B_M,~\varphi'(\alpha) \neq 0 ~~\forall~\alpha\in[0,1] \Big\} .
\end{align*}

Then, a MEP connecting the minimizers $y^A_M$ and $y^B_M$ is a solution of the following problem: Find $ \varphi \in \U $ such that
\begin{subequations} 
\label{mep}
	\begin{numcases}{}
	\label{mep1}\hspace{5pt}
	P^\perp_{\varphi'(\alpha)}\nabla E \big( \varphi(\alpha) \big) = {0} 	\hspace{6ex}~\forall~\alpha \in [0,1], \\[1ex]
	\label{mep2}\hspace{5pt}
	\Gamma(\varphi) = 0,
	\end{numcases}
\end{subequations}
where the operator $\Gamma: \Co([0,1];\HH)\rightarrow \Co([0,1];\R)$ is given by
\begin{equation*}
	\Gamma(\varphi)(\alpha) := \int_0^{\alpha} |\varphi'(s)|\dd s - \alpha \int_0^{1} |\varphi'(s)|\dd s\qquad{\rm for}~\alpha\in[0,1].
\end{equation*}
Note that \eqref{mep1} indicates that the gradient $\nabla E \big( \varphi(\alpha) \big)$ vanishes in the subspace perpendicular to the tangent $\varphi'(\alpha)$, which is well-defined since $\varphi\in\U$.
Further, \eqref{mep2} enforces the curve to be parameterized by normalized arc length, which removes the redundancy due to re-parameterization.

If $\mv$ is a solution of \eqref{mep}, since $y^A_M$ and $y^B_M$ are local minimizers, there exists an $\sad\in(0,1)$ with $y_S=\mv(\sad)\in\HH$ such that the energy $E(y_S)$ reaches the maximum along the MEP. 
This implies that $\nabla E(y_S)$ vanishes in the tangent direction $\mv'(\sad)$ and thus it is a critical point. 
Since the energy $E(y_S)$ is a maximum along the tangent $\mv'(\sad)$, we generically expect that the Hessian $\nabla^2E(y_S)$ has at least one negative eigenvalue.
For the sake of simplicity of the analysis, we will assume throughout this paper that  
\begin{flushleft}
\label{assump:index1saddle}
{\bf (A)}~
$y_M^A,~y_S,~y_M^B$ are the {\em only} critical points along the MEP $\mv$. Moreover, $y^A_M$ and $y^B_M$ are strong minimizers, while $y_S = \mv(\sad)$ is an {\em index-1 saddle}.
\end{flushleft}

Although {\bf (A)} is natural and will be satisfied by {\em many} (if not most) MEPs one encounters in practice, there are also cases where this assumption fails. 
For example, in \cite{MEP_MAP} examples are given where there is more than one {\em index-1 saddle} along an MEP.
Our theory can be generalized to these cases by adjusting the formulations, provided that all critical points along the MEP satisfy certain stability conditions. 
On the other hand, the strong stability assumption on the critical points cannot be readily weakened.

We observe by a direct calculation (see Lemma \ref{lemma:lambda}) that, if $\mv\in C^2\big( [0,1];\HH \big)$ solves \eqref{mep}, then $\mv'(0)$, $\mv'(\sad)$ and $\mv'(1)$ are eigenvectors of the Hessians $\nabla^2 E(y^A_M)$, $\nabla^2 E(y_S)$ and $\nabla^2 E(y^B_M)$, respectively.
This implies that the MEP has to go through the critical points in the direction of some eigenvector of the corresponding Hessian. 
The following assumption formalizes the requirement that
there is a unique optimal path to exit the energy minimizer. 

\begin{flushleft}
\label{assump:simplelowest}
{\bf (B)}~Let $\sA, \sB$ be the eigenvalues associated, respectively, with the eigenvectors $\mv'(0)$ and $\mv'(1)$. 
We assume that 
(i) they are lower bounds of the spectrum of $\nabla^2 E(y^A_M)$ and $\nabla^2 E(y^B_M)$, respectively; and 
(ii) they are simple and isolated eigenvalues. 
\end{flushleft}

Next, we rewrite the MEP equation \eqref{mep} in a form more convenient for our analysis. 
Let 
\begin{align}
\label{space:Y}
Y := \Big\{ \rv\in C\left( [0,1];\HH \right) :~ \rv(0) = \rv(1) = {0},~ f~\text{is differentiable at}~\alpha = 0, \sad, 1 \Big\}
\end{align}	
be the image space equipped with the following norm:
\begin{align}
\label{eq:norm:Y} 
\lVert f\rVert_{\Y} := \left\lVert \frac{ f(\alpha)}{\alpha(\alpha-1)}\right\rVert_{\Cz((0,1);\HH)} +  \left\lVert \frac{ f(\alpha) - f(\sad)}{\alpha-\sad}\right\rVert_{\Cz([0,\sad)\cup(\sad,1];\HH)}.
\end{align}
Then, we define 
$\mathscr{F} : \U \rightarrow \Y$ by 
\begin{align}
    \label{def:F}
    &\mathscr{F}(\varphi) (\alpha) :=
    P^\perp_{\varphi'(\alpha)} \nabla E\big(\varphi(\alpha)\big) - \frac{|\varphi'(\alpha)| - L(\varphi)}{L(\varphi)} P_{\varphi'(\alpha)} \nabla E\big(\varphi(\alpha)\big) 
    \\[1ex]
    &\hskip 0.5cm 
    + \frac{\alpha(\alpha-1)|\varphi'(\alpha)|}{\sad(\sad-1)L(\varphi)} P_{\varphi'(\alpha)} \nabla E\big(\varphi(\sad)\big) 
    + (\sA+\sB) \left( \Gamma(\varphi)(\alpha) - \frac{\alpha(\alpha-1)}{\sad(\sad-1)}\Gamma(\varphi)(\sad) \right)\frac{\varphi'(\alpha)}{\lvert \varphi'(\alpha) \rvert },
    \nonumber
\end{align}
for $\alpha\in[0,1]$, where $L : C^1([0,1];\HH) \rightarrow \R$ is the length function, 
\begin{equation*}
    L(\varphi) := \int_0^1 \lvert \varphi'(s) \rvert \dd s.
\end{equation*}
The weights $\alpha^{-1}(\alpha-1)^{-1},~(\alpha-\sad)^{-1}$ in the $\Y$-norm \eqref{eq:norm:Y} and in the construction of $\mathscr{F}$ are crucial for the stability analysis. 
We will provide an intuitive explanation in Section \ref{sec:stab_inter}, why $\mathscr{F}$ is formulated like this and how it affects the stability of MEP.
The following lemma shows that the range of $\mathscr{F}$ is in $\Y$, the proof of which is given in Section \ref{sec:mep}. 

\begin{lemma}\label{le:range}
$\mathscr{F}(\varphi) \in \Y$ for any $\varphi\in \U$.
\end{lemma}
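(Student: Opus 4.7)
The plan is to verify directly the three defining properties of $\Y$: that $\mathscr{F}(\varphi)\in \Cz([0,1];\HH)$, that the boundary conditions $\mathscr{F}(\varphi)(0)=\mathscr{F}(\varphi)(1)=0$ hold, and that $\mathscr{F}(\varphi)$ is differentiable at the three distinguished points $0$, $\sad$, $1$. Once these are in hand, the $\Y$-norm \eqref{eq:norm:Y} is automatically finite: differentiability at $0$ and $1$ together with the vanishing boundary values forces $\mathscr{F}(\varphi)(\alpha)/[\alpha(\alpha-1)]$ to extend continuously (and hence boundedly) to $[0,1]$, and differentiability at $\sad$ forces $[\mathscr{F}(\varphi)(\alpha)-\mathscr{F}(\varphi)(\sad)]/(\alpha-\sad)$ to extend continuously across $\sad$.

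Continuity and the endpoint conditions are essentially bookkeeping. Since $\varphi'$ is continuous and nowhere vanishing on $[0,1]$, both $P_{\varphi'(\alpha)}$ and $P^\perp_{\varphi'(\alpha)}$ depend continuously on $\alpha$; composed with the continuous maps $\alpha\mapsto\nabla E(\varphi(\alpha))$ and $\alpha\mapsto|\varphi'(\alpha)|$, and with the $\Co$ map $\Gamma(\varphi)$, this yields $\mathscr{F}(\varphi)\in\Cz([0,1];\HH)$. At $\alpha=0$ the first two summands of $\mathscr{F}(\varphi)$ vanish because $\nabla E(y^A_M)=0$, the third vanishes from its $\alpha(\alpha-1)$ prefactor, and the fourth vanishes from $\Gamma(\varphi)(0)=0$ together with the same $\alpha(\alpha-1)$ factor multiplying $\Gamma(\varphi)(\sad)$; the computation at $\alpha=1$ is entirely symmetric. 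For differentiability at $\alpha=0$ (and analogously at $\alpha=1$), I would Taylor-expand
\begin{equation*}
\nabla E(\varphi(\alpha)) = \alpha\,\nabla^2 E(y^A_M)\varphi'(0) + o(\alpha)\qquad\text{as }\alpha\to 0^+,
\end{equation*}
which is legitimate because $E\in C^3$, $\varphi$ is $\Co$, and $\varphi(0)=y^A_M$ is critical. Dividing each of the four summands of $\mathscr{F}(\varphi)(\alpha)$ by $\alpha$, the factor of $\alpha$ is absorbed by the $\alpha(\alpha-1)$ prefactor in one term, by the Taylor expansion above in two terms, and by $\Gamma(\varphi)(\alpha)/\alpha\to|\varphi'(0)|-L(\varphi)$ in the last, so every summand has a finite limit.

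The hard part will be differentiability at $\alpha=\sad$. Here $\nabla E(\varphi(\sad))$ is generically nonzero, and because $\varphi$ is only $\Co$ we have no modulus-of-continuity control on $\alpha\mapsto P_{\varphi'(\alpha)}$, so $[P^\perp_{\varphi'(\alpha)}-P^\perp_{\varphi'(\sad)}]\nabla E(\varphi(\sad))$ need not be $O(\alpha-\sad)$ on its own. The key structural observation is that the first three summands of $\mathscr{F}(\varphi)$ can be rewritten as
\begin{equation*}
P^\perp_{\varphi'(\alpha)}\nabla E(\varphi(\alpha))+P_{\varphi'(\alpha)}W(\alpha),
\end{equation*}
where
\begin{equation*}
W(\alpha):=\frac{L(\varphi)-|\varphi'(\alpha)|}{L(\varphi)}\nabla E(\varphi(\alpha))+\frac{\alpha(\alpha-1)|\varphi'(\alpha)|}{\sad(\sad-1)L(\varphi)}\nabla E(\varphi(\sad)),
\end{equation*}
and a direct calculation gives $W(\sad)=\nabla E(\varphi(\sad))$. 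Subtracting the value at $\sad$ and splitting the difference in the standard way, the two problematic contributions $[P^\perp_{\varphi'(\alpha)}-P^\perp_{\varphi'(\sad)}]\nabla E(\varphi(\sad))$ and $[P_{\varphi'(\alpha)}-P_{\varphi'(\sad)}]\nabla E(\varphi(\sad))$ cancel exactly by virtue of $P_v+P^\perp_v=I$. What survives is
\begin{equation*}
P^\perp_{\varphi'(\alpha)}\bigl[\nabla E(\varphi(\alpha))-\nabla E(\varphi(\sad))\bigr]+P_{\varphi'(\alpha)}\bigl[W(\alpha)-W(\sad)\bigr],
\end{equation*}
whose first bracket is $O(\alpha-\sad)$ by $\Co$ regularity of $\nabla E\circ\varphi$, and whose second bracket decomposes into pieces that are either $O(\alpha-\sad)$ or proportional to $\alpha(\alpha-1)-\sad(\sad-1)=(\alpha-\sad)(\alpha+\sad-1)$. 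The fourth, $\Gamma$-based summand of $\mathscr{F}(\varphi)$ vanishes at $\sad$ by construction, and its quotient by $\alpha-\sad$ converges by $\Co$ regularity of $\Gamma(\varphi)$ together with the same polynomial identity. Dividing through by $\alpha-\sad$ and collecting limits yields $\mathscr{F}(\varphi)'(\sad)$, which closes the verification.
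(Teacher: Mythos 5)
Your proof is correct and takes essentially the same route as the paper's. The paper packages the cancellation you observe through the auxiliary functions $\tau(\varphi)$ and $\rho(\varphi)$ of \eqref{def:tau}--\eqref{def:rho} (which vanish at $\sad$) and the reformulation \eqref{def:F2}, whereas you reach the identical conclusion by introducing $W$ and noting the projector identity $P_v + P_v^\perp = I$ together with $W(\sad) = \nabla E(\varphi(\sad))$; one checks directly that $W = \nabla E(\varphi) - \tfrac{|\varphi'|}{L(\varphi)}\tau(\varphi)$, so the two decompositions coincide and the underlying mechanism is the same.
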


Now we can rewrite the MEP equation as follows: 
Find $\varphi\in \U$ such that 
\begin{equation}
\label{pb:mep}
\mathscr{F}(\varphi) = 0.
\end{equation}
The first term in \eqref{def:F} is perpendicular to the tangent direction $\varphi'$, which is exactly the same as the left-hand side of \eqref{mep1}. 
The three remaining terms in \eqref{def:F} are parallel to the tangent $\varphi'$ and are designed to enforce the curve to be parameterized by normalized arc length. 
We can show some equivalence of \eqref{mep} and \eqref{pb:mep} in the following lemma, whose proof is given in Section \ref{sec:mep}.

\begin{lemma}
\label{le:MEP_equ}
Assume that $\mv\in \U$ solves \eqref{mep} and {\bf (A)} is satisfied. Then $\mv$ is a solution of \eqref{pb:mep}. 
Moreover, let $\varphi\in\U$ be a solution of \eqref{pb:mep} and let 
\begin{equation}
\tilde\varphi\left(\frac{\int_0^\alpha |\varphi'(s)|\dd s}{L(\varphi)}\right) := \varphi (\alpha) \qquad{\rm for}~\alpha\in[0,1],
\end{equation} 
then $\tilde\varphi \in\U$ and solves \eqref{mep}.
\end{lemma}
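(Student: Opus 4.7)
My plan is to establish the two implications separately, each reducing to direct verification once the four summands in the definition \eqref{def:F} of $\mathscr{F}$ are examined individually. The key structural observation is that the last three summands are scalar multiples of $\varphi'(\alpha)$, while only the first lies in the orthogonal complement of $\varphi'(\alpha)$; this orthogonal decomposition drives both directions.

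For the forward implication, assume $\mv \in \U$ solves \eqref{mep} and \textbf{(A)} holds. I would show that each of the four terms of $\mathscr{F}(\mv)(\alpha)$ vanishes separately. The first summand is zero by \eqref{mep1}. Since \eqref{mep2} gives $\Gamma(\mv)(\alpha)=0$ for all $\alpha\in[0,1]$, differentiation in $\alpha$ yields $|\mv'(\alpha)|=L(\mv)$, so the scalar prefactor in the second summand vanishes, and for the same reason the bracket in the fourth summand is identically zero. The third summand uses assumption \textbf{(A)}: the energy-maximizer along $\mv$ is the index-1 saddle $y_S=\mv(\sad)$, hence $\nabla E(\mv(\sad))=0$ and the whole term vanishes.

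For the converse, let $\varphi\in\U$ solve \eqref{pb:mep}. Since the last three summands of $\mathscr{F}(\varphi)(\alpha)$ all lie in $\mathrm{span}(\varphi'(\alpha))$, projecting $\mathscr{F}(\varphi)=0$ onto the orthogonal complement of $\varphi'(\alpha)$ extracts precisely
\begin{equation*}
P^\perp_{\varphi'(\alpha)}\nabla E\big(\varphi(\alpha)\big) = 0 \qquad\forall~\alpha\in[0,1].
\end{equation*}
I then introduce the map $\beta:[0,1]\to[0,1]$, $\beta(\alpha):=L(\varphi)^{-1}\int_0^\alpha|\varphi'(s)|\dd s$, which is a $C^1$ diffeomorphism because $|\varphi'|>0$ on $[0,1]$, and set $\tilde\varphi:=\varphi\circ\beta^{-1}$. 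A direct computation gives $|\tilde\varphi'|\equiv L(\varphi)$, together with $\tilde\varphi(0)=y^A_M$, $\tilde\varphi(1)=y^B_M$, which confirms $\tilde\varphi\in\U$. Because $\tilde\varphi'(\beta(\alpha))$ is a positive scalar multiple of $\varphi'(\alpha)$, one has $P^\perp_{\tilde\varphi'}=P^\perp_{\varphi'}$, so \eqref{mep1} for $\tilde\varphi$ follows from the perpendicular identity above; constant speed of $\tilde\varphi$ then yields $\Gamma(\tilde\varphi)\equiv 0$, which is \eqref{mep2}.

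I do not anticipate a genuine obstacle here; the argument is essentially a careful bookkeeping of the parallel/perpendicular decomposition of \eqref{def:F} combined with the standard fact that $C^1$ arc-length reparameterization preserves membership in $\U$ and the orthogonality structure of \eqref{mep1}. The subtlety worth flagging is only that the weights $\alpha(\alpha-1)/(\sad(\sad-1))$ in terms three and four of $\mathscr{F}$, and the prefactor $\sA+\sB$ in the fourth, play no role in the equivalence itself -- they are engineered for the linearisation to be an isomorphism in the stability analysis later in the paper -- so the present lemma is essentially a consistency check on the reformulation.
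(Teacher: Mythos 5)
Your proof is correct and follows essentially the same route as the paper: the forward direction verifies term by term that \eqref{mep} and the saddle condition $\nabla E(\mv(\sad))=0$ annihilate $\mathscr{F}(\mv)$, and the converse extracts \eqref{mep1} from the orthogonal component of $\mathscr{F}(\varphi)=0$ and then re-parameterizes by normalized arc length. You have merely spelled out the parallel/perpendicular decomposition and the diffeomorphism property of the re-parameterization more explicitly than the paper's terse two-sentence argument.
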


Since $E\in C^3$, we have that $\mathscr{F}$ is $C^1$ in a neighbourhood of $\U$.
We will denote the first variation of $\mathscr{F}$ by $\delta\mathscr{F}:X\rightarrow Y$ with
\begin{equation*}
    \X := \Big\{\tv\in \Co([0,1];\HH):~\tv(0)=\tv(1)=0 \Big\},
\end{equation*}
equipped with the norm $\|\tv\|_{\X}:=\|\tv\|_{\Co([0,1];\HH)}$.
The detailed expression for $\delta\mathscr{F}$ will be given in Section \ref{sec:mep}.
The following theorem is the main result of this paper, stating the continuity and stability of $\delta\mathscr{F}$.

\begin{theorem}
\label{theorem:Stability}
Assume that $\mv\in C^2\big( [0,1];\HH \big)$ solves \eqref{pb:mep} and {\bf (A)} is satisfied.
\begin{itemize}
\item[(i)]
Then there exists a $\delta_0 > 0$ such that for $\varphi_1,\varphi_2 \in B_{\delta_0}(\mv)\subset \U$, 
\begin{align*}
\lVert \mathscr{F}(\varphi_1) - \mathscr{F}(\varphi_2) \rVert_{\Y} 
&\leq C_0 \lVert \varphi_1-\varphi_2 \rVert_\X
\qquad \text{and}
\\[1ex]
\lVert \delta\mathscr{F}(\varphi_1) - \delta\mathscr{F}(\varphi_2) \rVert_{\LL(\X,\Y)}
&\leq C_1 \lVert \varphi_1-\varphi_2 \rVert_\X.
\end{align*}
where $C_0$ and $C_1$ are positive constants that depend only on $\delta_0,~E$ and $\mv$.
\item[(ii)]
If {\bf (B)} is also satisfied, then $\delta\mathscr{F}(\mv)$ is an isomorphism.
In particular, there exists a constant $\gamma>0$ depending only on $E$ and $\mv$ such that
\begin{equation}
\label{eq:stab}
\lVert \delta\mathscr{F}(\mv)^{-1} \rVert_{\LL(\Y,\X)} \leq \gamma.
\end{equation}
\end{itemize}
\end{theorem}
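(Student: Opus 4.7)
The proof splits naturally: part (i) is a local Lipschitz bookkeeping exercise once an explicit formula for $\delta\mathscr{F}$ has been written down, while part (ii) is the substantive claim and requires a careful structural analysis of the linearisation that exploits the precise construction of $\mathscr{F}$.

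For (i), I would first differentiate each of the four terms in \eqref{def:F} using the $C^3$ regularity of $E$, the chain rule $\frac{d}{d\varepsilon}\nabla E(\varphi+\varepsilon\tv)|_{\varepsilon=0}=\nabla^2 E(\varphi)\tv$, the identity $\partial_v P^\perp_v = -\partial_v P_v$, and the variation of $L(\varphi)$. The resulting $\delta\mathscr{F}(\varphi)\tv$ depends on $\varphi$ only through $\varphi,\varphi',\nabla E(\varphi),\nabla^2 E(\varphi)$ and integrals of $|\varphi'|$, each of which is locally Lipschitz in $\varphi$ on a ball $B_{\delta_0}(\mv)\subset\U$. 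The only delicate point is accommodating the weighted $\Y$-norm: one must verify that both differences $\mathscr{F}(\varphi_1)-\mathscr{F}(\varphi_2)$ and $\delta\mathscr{F}(\varphi_1)-\delta\mathscr{F}(\varphi_2)$ vanish at $\alpha=0,1$ and match values at $\alpha=\sad$, so that division by $\alpha(\alpha-1)$ and by $\alpha-\sad$ yields bounded continuous $\HH$-valued functions. This follows from the construction of the three correction terms in \eqref{def:F} — the third term vanishes at $\alpha=0,1$ by its $\alpha(\alpha-1)$ prefactor, and a direct evaluation at $\alpha=\sad$ shows the second and third terms combine to $P_{\varphi'(\sad)}\nabla E(\varphi(\sad))$ — and the remaining estimates reduce to a mean-value argument.

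For (ii), the key observation is that at the solution $\mv$ the gradient is parallel to the tangent, so $\nabla E(\mv(\alpha)) = \mu(\alpha)\,\mv'(\alpha)/|\mv'(\alpha)|$ for a scalar $\mu\in C^1([0,1])$ with $\mu(0)=\mu(\sad)=\mu(1)=0$. Substituting this into the explicit form of $\delta\mathscr{F}(\mv)$ produces massive cancellations, and the operator decouples to leading order into a perpendicular block on $\tvp := P^\perp_{\mv'}\tv$ and a tangential block on the scalar $\tau(\alpha) := \bigl(\tv(\alpha),\mv'(\alpha)/|\mv'(\alpha)|\bigr)$. The perpendicular block is essentially the pointwise multiplication $\tvp\mapsto P^\perp_{\mv'(\alpha)}\nabla^2 E(\mv(\alpha))\tvp$, which under assumption {\bf (A)} is uniformly positive on $P^\perp_{\mv'(\alpha)}\HH$ for every $\alpha\in[0,1]$: at the minimizers the Hessian is globally positive definite, and at the saddle its unique negative eigendirection is parallel to $\mv'(\sad)$ and hence annihilated by the projector, with positivity at intermediate $\alpha$ following by continuity and compactness. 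The tangential block is an integro-differential operator in $\tau$ whose leading contributions come from $\mu'(\alpha)\tau$ (through the $P_{\mv'}\nabla E$ terms) and from the $(\sA+\sB)$-weighted $\Gamma$-correction; assumption {\bf (B)} guarantees $\sA,\sB>0$, which makes this block coercive on the scalar component.

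The main obstacle, and the reason for the singular weights in $\lVert\cdot\rVert_\Y$, lies in matching topologies: given $f\in\Y$ I must produce a preimage $\tv\in\X=\Co([0,1];\HH)$ with $\|\tv\|_\X\le\gamma\|f\|_\Y$. Near the critical points the tangential equation is regular-singular with coefficient $\mu$ vanishing linearly at $\alpha=0,\sad,1$, and the weights $1/(\alpha(\alpha-1))$ and $1/(\alpha-\sad)$ in \eqref{eq:norm:Y} are precisely those required to compensate these zeros and still yield a $C^1$ solution rather than a merely continuous one. Once each block has been inverted with constants controlled by the spectral gaps supplied by {\bf (A)} and {\bf (B)}, the bound \eqref{eq:stab} follows from the open mapping theorem; controlling the degeneracy at the three critical points simultaneously, and verifying that the inverse genuinely lands in $\X$ rather than $\Cz$, is the most delicate piece of the argument and is what dictates the choice of both $\mathscr{F}$ and the $\Y$-norm.
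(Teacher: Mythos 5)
Your plan for part (i) is fine and matches the paper's brief treatment. For part (ii), however, there is a genuine and decisive gap in the way you handle the perpendicular block.

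You claim that after the decomposition the perpendicular block ``is essentially the pointwise multiplication $\tvp\mapsto P^\perp_{\mv'(\alpha)}\nabla^2E(\mv(\alpha))\tvp$,'' and you then argue coercivity by asserting this operator is uniformly positive on $P^\perp_{\mv'(\alpha)}\HH$ for all $\alpha\in[0,1]$ via ``continuity and compactness.'' Both steps fail. First, the perpendicular component of $\delta\mathscr{F}(\mv)\tv$ is not a pointwise multiplication: substituting $\nabla E(\mv)=\ml\,\mv'$ into the linearisation produces the term $P^\perp_{\mv'}\big(\nabla^2E(\mv)\tv - \ml\,\tv'\big)$, so the perpendicular block is a first-order ODE $\ml(\alpha)\tvp'(\alpha)=\big(B(\alpha)+\ml(\alpha)D(\alpha)\big)\tvp(\alpha)-P^\perp_{\mv'(\alpha)}\rv(\alpha)$, singular precisely because $\ml$ vanishes at $\alpha=0,\sad,1$. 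You correctly identify that structure for the tangential scalar block but miss it entirely for the perpendicular block, which is where the substantive work in the paper lives (semigroup estimates for the frozen-coefficient ODEs near the three critical points, then a fixed-point/perturbation argument to glue them and extend along the path). Second, positivity of $P^\perp_{\mv'(\alpha)}\nabla^2E(\mv(\alpha))P^\perp_{\mv'(\alpha)}$ at intermediate $\alpha$ is neither assumed nor needed: assumptions {\bf (A)} and {\bf (B)} only control the Hessian at the three critical points, and the paper explicitly remarks that no orthogonal stability along the path is required, because the inverse is constructed by integrating a carefully formulated ODE along the path rather than by pointwise coercivity. A ``continuity'' argument from the endpoints and the saddle cannot reach intermediate $\alpha$, and a standard energy landscape can certainly exhibit negative curvature transverse to the MEP away from critical points, so this coercivity claim is simply false in the generality of the theorem.

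The consequence is that your proof strategy for producing the preimage $\tv\in\X$ from $f\in\Y$ does not go through: inverting a pointwise positive multiplication operator would give $\tvp\in C$ controlled in a $C$-type norm, but the actual problem is a degenerate ODE whose solution must be shown to lie in $C^1$ with derivative bounds uniformly up to the singular points $\alpha=0,\sad,1$, which requires the sectorial-operator/analytic-semigroup machinery and the spectral-gap condition $\sigma_B<\omega_B$ (equivalently $\omega_B/\ml'(1)>1$, from {\bf (B)}) to guarantee integrability of the semigroup kernel. Your discussion of the role of the weights and the $C^1$ versus $C^0$ topology is pointing in the right direction, but you have attached it only to the tangential scalar equation; the same issue (and the bulk of the technical effort) belongs to the perpendicular block, and a coercivity argument is not a valid substitute.
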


\begin{remark}
The condition {\bf (B)} is necessary for the stability of MEP since it enforces a unique direction along which the MEP can leave the minimizers, i.e. along the lowest-lying eigenvector $\mv'$.
If the corresponding eigenvalue at the minimizer is degenerate, then any perturbation within the eigenspace of this degenerated eigenvalue (which could be relatively large) may give rise to very small force.
In Section \ref{sec:finite}, we provide an example to demonstrate the loss of stability due to this degeneracy.
Moreover, if $\sA$ or $\sB$ are not the lowest eigenvalues, then a curve may favor leaving the minimizers along the lowest-lying eigenvector (which is not consistent with $\mv'$).
We provide in Section \ref{sec:finite} an example when $\delta\mathscr{F}(\mv)$ is not an isomorphism in such a situation.

We also remark on the, maybe surprising, fact that we did not require any assumption of stability of the potential energy orthogonal to the path (e.g., positivity of the hessian in directions normal to the MEP). While such a condition would be physically natural it is not required in our stability analysis since the stability is obtained via integrating a carefully formulated ODE {\em along} the path. 
\end{remark}

\subsection{Motivation of the stability analysis}
\label{sec:stab_inter}

In this subsection, we provide a motivation behind the construction of $\mathscr{F}$ and the weighted $\Y$-norm. 
Moreover, we will give an intuitive explanation on why the stability result in Theorem \ref{theorem:Stability} (ii) can hold.
Our stability result means that for a curve near the MEP, its deviation to the MEP can be controlled by its force under appropriate norms.
We equip the curve with the $\Co$-norm and the force with the $\Y$-norm \eqref{eq:norm:Y}, and formulate the stability of MEP as
\begin{equation*}
\|\varphi -\mv\|_{\X} \leq C \|\mathscr{F}(\varphi)\|_{\Y}
\end{equation*}
for any $\varphi\in\U$ in some neighborhood of $\mv$. 

The deviation in $C^1$-norm of a curve is determined by its oscillation and parameterization.
A key observation is that for a regular path $\varphi\in\U$, even if the force on the path has a small $C$-norm, it is still possible that the curve has strong oscillations near the critical points, $y_M^A,~y_S,~y_M^B$ (see Figure \ref{fig:oscillations} for a schematic plot).
However, replacing the $C$-norm with a suitably weighted variant, namely the $\Y-$norm \eqref{eq:norm:Y}, prevents such oscillations.

\begin{figure}
    \centering
    \includegraphics[width=8cm]{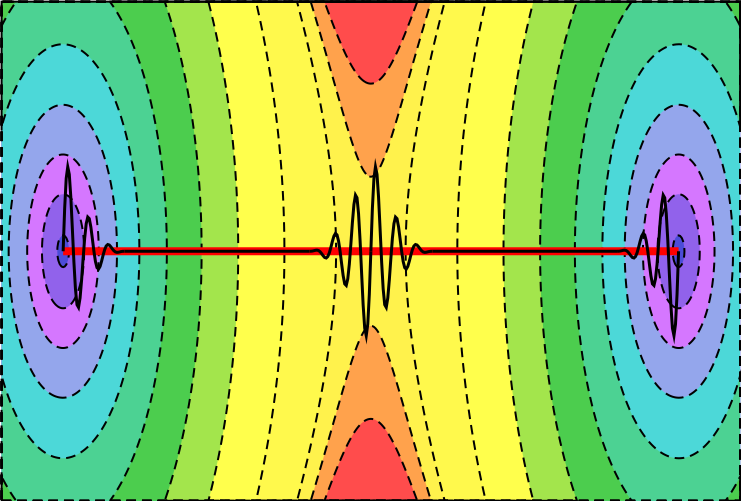}
    \caption{ MEP (red line) and a curve with oscillations near the critical points (solid black curve). The force in the $C$-norm, $\|\mathscr{F}(\varphi)\|_{C([0,1];\HH)}$, is small despite of large deviation in $C^1$-norm, $\|\varphi-\mv\|_{C^1([0,1];\HH)}$.}
    \label{fig:oscillations}
\end{figure}

To see this, we first consider the deviation at the minimizers at $\alpha=0,1$. We obtain by a direct computation that
\begin{align}
\label{eq:exp1}
\lim_{\alpha\rightarrow 0^+} \frac{\mathscr{F}(\varphi)(\alpha)}{\alpha(\alpha-1)} 
= P_{\varphi'(0)}^\perp \nabla^2 E(y^A_M)\varphi'(0) - \frac{|\varphi'(0)| - L(\varphi)}{L(\varphi)} P_{\varphi'(0)} \nabla^2 E(y^A_M)\varphi'(0).
\end{align}
Recall that the MEP goes through the minimizer $y^A_M$ in the direction of an eigenvector of the Hessian $\nabla^2 E(y^A_M)$. 
The first term of the right hand side in \eqref{eq:exp1} is perpendicular to tangent $\varphi'(0)$ and it can prevent the oscillation at the minimizer $y^A_M$. 
The second term of the right hand side in \eqref{eq:exp1} is parallel to the tangent and it can control the parameterization by the coefficient $|\varphi'(0)| - L(\varphi)$.

We turn to the deviation at the saddle at $\alpha=\sad$. We observe from \eqref{def:F} that
\begin{equation*}
\frac{\mathscr{F}(\varphi)(\sad)}{\sad(\sad-1)} = \frac{\nabla E\big(\varphi(\sad)\big)}{\sad(\sad-1)},
\end{equation*}
which restricts the distance between $\varphi(\sad)$ and the saddle $y_S$. 
Then, we obtain from a direct computation that
\begin{multline}
\label{eq:exp2}
\lim_{\alpha\rightarrow \sad} \frac{\mathscr{F}(\varphi)(\alpha)-\mathscr{F}(\varphi)(\sad)}{\alpha-\sad} 
=  P_{\varphi'(\sad)}^\perp \nabla^2 E\big(\varphi(\sad)\big)\varphi'(\sad) 
\\[1ex]
- \frac{|\varphi'(\sad)| - L(\varphi) }{L(\varphi)} P_{\varphi'(\sad)} \nabla^2 E\big(\varphi(\sad)\big)\varphi'(\sad) 
+ (\sA+\sB)\big(|\varphi'(\sad)| - L(\varphi) \big)  \frac{\varphi'(\sad)}{|\varphi'(\sad)|}.
\quad
\end{multline}
With the fact that $\varphi(\sad)$ is near the saddle $y_S$, the first term of the right hand side in \eqref{eq:exp2} could prevent the oscillation at the saddle. 
Besides, since the corresponding eigenvalue of $\mv'(\sad)$ is $\ml'(\sad) < 0$ (see Lemma \ref{lemma:lambda} (ii)) while $\sA,\sB>0$, the second and third terms of the right hand side in \eqref{eq:exp2} could control the parameterization at $\sad$.

For the deviation between the critical points, since the gradient never vanishes, the component of the force perpendicular to the tangent $P_{\mv'}^\perp\nabla E(\mv)$ could control the angle between the tangent and the gradient.
Thus it can prevent the oscillation. Moreover, the tangential force will naturally bound the parameterization.

\subsection{Applications}
\label{sec:applications}
The stability result in Theorem \ref{theorem:Stability} implies in particular that a small perturbation of the potential energy model gives rise to a small perturbation in the resulting MEP. In this section we state a concrete corollary explaining this statement in more detail. But one can expect even more general consequences for the stability of the MEP under perturbations of the entire MEP equations which in particular leads to novel analyses of the nudged elastic band and string methods that are explored in full detail in \cite{dMEPconv}. 

Fix an arbitrary $\epsilon > 0$. Let $\mathcal{U}_\epsilon := \bigcup_{\alpha \in [0, 1]} B_\epsilon(\bar\varphi(\alpha))$ where $B_\epsilon(y) := \{\tilde y\in\HH:~ |\tilde y - y|\leq \epsilon \}$.
Let $\HH_\delta$ be a sequence of subspaces of $\HH$ and let $\dE \in C^2(\HH_\delta; \R$) be an energy functional approximating $E|_{\HH_\delta}$. We assume {\em consistency} of this perturbed model, that is, 
 \begin{eqnarray}\label{eq:app1-5}
 \sup_{y \in \mathcal{U}_\epsilon} \inf_{y_\delta \in \mathcal{H}_\delta} |y_\delta - y|  +\|\dE - E\|_{C^2(\mathcal{U}_\epsilon\cap \mathcal{H}_\delta)} \rightarrow 0
 \quad{\rm as}~~\delta\rightarrow 0 .
 \end{eqnarray}
From assumption {\bf (A)}, if $\delta$ is sufficiently small (to ensure stability of the perturbed problem), then there exist two minimizers $y^A_{M,\delta},~y^B_{M,\delta}$ of $\dE$ satisfying
\begin{equation}
\label{eq:app1-4}
\begin{aligned}
|y^A_{M,\delta} - y^A_M| &\leq C \|\dE - E\|_{C^1(\mathcal{U}_\epsilon\cap \mathcal{H}_\delta)} + C \inf_{y_\delta\in\HH_\delta}|y_\delta-y^A_M|, 
\\[1ex]
|y^B_{M,\delta} - y^B_M| &\leq C \|\dE - E\|_{C^1(\mathcal{U}_\epsilon\cap \mathcal{H}_\delta)} + C \inf_{y_\delta\in\HH_\delta}|y_\delta-y^B_M|.
\end{aligned}
\end{equation}
Define
\begin{align*}
\U_\delta := \Big\{ \varphi \in C^1\big( [0,1];\HH_\delta \big) :~  \varphi(0) = y^A_{M,\delta},~\varphi(1) = y^B_{M,\delta},~\varphi'(\alpha) \neq 0 ~~\forall~\alpha\in[0,1] \Big\} .
\end{align*}
Then, a MEP in the perturbed model, connecting $y^A_{M,\delta}$ and $y^B_{M,\delta}$, is the solution of the following problem: Find $\varphi \in \U_\delta$ such that
\begin{equation} \label{mep-app1}
\left\{
	\begin{aligned}
	& P^\perp_{\varphi'(\alpha)}\nabla \dE \big( \varphi(\alpha) \big) = {0} 	\hspace{6ex}~\forall~\alpha \in [0,1], \\[1ex]
	& \Gamma(\varphi) = 0.
	\end{aligned}\right.
\end{equation}

\begin{corollary}
\label{coro:app1}
Let $\mv\in C^2\big([0,1];\HH\big)$ be the solution of \eqref{mep} and assume that {\bf (A)} and {\bf (B)} are satisfied.
If $\delta$ is sufficiently small, then \eqref{mep-app1} has a solution $\mvp\in\U_\delta$ such that
\begin{equation}
\label{eq:Corollar1}
\|\mv-\mvp\|_{C^1([0,1];\HH)} \leq C \|\dE - E\|_{C^1(\mathcal{U}_\epsilon\cap \mathcal{H}_\delta)} + C \inf_{\varphi_\delta\in\U_\delta}\|\varphi_\delta-\mv\|_{C^1([0,1];\HH)},
\end{equation}
where the constant $C$ depends only on $E$ and $\mv$.
\end{corollary}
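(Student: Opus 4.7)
The plan is to apply a quantitative inverse function theorem (Newton--Kantorovich) to a perturbed operator $\mathscr{F}_\delta$ on $\U_\delta$, defined exactly as in \eqref{def:F} but with $E$ replaced by $\dE$, $y^A_M,y^B_M$ replaced by $y^A_{M,\delta},y^B_{M,\delta}$, and the eigenvalues $\sA,\sB$ replaced by their perturbed analogs (obtained from the standard implicit function theorem applied to $\nabla\dE$, using the non-degeneracy in {\bf (A)} together with \eqref{eq:app1-5}). Identifying $\HH_\delta \subset \HH$, the $\X$- and $\Y$-norms of curves in $\U_\delta$ are well defined. By Lemma \ref{le:MEP_equ} applied to $\dE$, any zero of $\mathscr{F}_\delta$ in $\U_\delta$ yields (after arc-length reparameterization) a solution of \eqref{mep-app1}, so it suffices to produce such a zero near $\mv$.

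Set $\eta_0 := \|\dE - E\|_{C^2(\mathcal{U}_\epsilon\cap \HH_\delta)}$, $\eta_1 := \inf_{\varphi_\delta\in\U_\delta}\|\varphi_\delta-\mv\|_{\Co([0,1];\HH)}$, and pick $\tilde\varphi_\delta \in \U_\delta$ nearly realizing the infimum so that $\|\tilde\varphi_\delta - \mv\|_\X \leq 2\eta_1$. I would then proceed in three steps. \emph{Residual:} using $\mathscr{F}(\mv) = 0$ and the decomposition
\[
\mathscr{F}_\delta(\tilde\varphi_\delta) = \bigl[\mathscr{F}_\delta(\tilde\varphi_\delta) - \mathscr{F}(\tilde\varphi_\delta)\bigr] + \bigl[\mathscr{F}(\tilde\varphi_\delta) - \mathscr{F}(\mv)\bigr],
\]
the second bracket is bounded by $C_0\eta_1$ via Theorem \ref{theorem:Stability}(i), while the first is bounded by $C\eta_0$ by a term-by-term comparison in \eqref{def:F}. \emph{Stability:} a Neumann-series perturbation of $\delta\mathscr{F}(\mv)^{-1}$, whose existence and bound $\gamma$ come from Theorem \ref{theorem:Stability}(ii), gives $\|\delta\mathscr{F}_\delta(\tilde\varphi_\delta)^{-1}\|_{\LL(\Y,\X)} \leq 2\gamma$ once $\eta_0+\eta_1$ is small enough; this uses the Lipschitz estimate of Theorem \ref{theorem:Stability}(i) and its analog for $\mathscr{F}_\delta$ (derived by the same proof, with constants uniform in small $\delta$). \emph{Newton step:} the quantitative inverse function theorem then produces $\mvp \in \U_\delta$ with $\mathscr{F}_\delta(\mvp) = 0$ and $\|\mvp - \tilde\varphi_\delta\|_\X \leq 2\gamma\|\mathscr{F}_\delta(\tilde\varphi_\delta)\|_\Y \leq C(\eta_0 + \eta_1)$, so the triangle inequality with $\|\tilde\varphi_\delta - \mv\|_\X \leq 2\eta_1$ delivers \eqref{eq:Corollar1}.

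The main obstacle is the residual bound $\|\mathscr{F}_\delta(\tilde\varphi_\delta) - \mathscr{F}(\tilde\varphi_\delta)\|_\Y \leq C\eta_0$: the weights $\alpha^{-1}(\alpha-1)^{-1}$ and $(\alpha-\sad)^{-1}$ in \eqref{eq:norm:Y} force the residual to vanish to first order at $\alpha=0,\sad,1$. At the endpoints this follows from $\nabla\dE(\tilde\varphi_\delta(0)) = \nabla\dE(\tilde\varphi_\delta(1)) = 0$ together with the $C^2$-closeness of $\dE$ and $E$, via the Taylor expansion used informally after \eqref{eq:exp1}. At $\alpha=\sad$ the analysis is similar provided $\tilde\varphi_\delta(\sad)$ lies close to the perturbed saddle $y_{S,\delta}$, which can be arranged within $\U_\delta$ without worsening the $\eta_1$-bound. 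A secondary point is aligning the saddle parameter $\sad$ between the unperturbed and perturbed problems, handled by composing with a small $C^1$ reparameterization whose distortion is absorbed into the constants.
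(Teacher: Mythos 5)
Your proposal follows essentially the same route as the paper: linearize around a curve in $\U_\delta$ close to $\mv$ (the paper constructs an explicit interpolant $\Pi_\delta\mv$ realizing the infimum in \eqref{eq:Corollar1}), bound the residual (consistency) and the perturbed linearization (stability) via Theorem~\ref{theorem:Stability}, and conclude by a quantitative inverse function theorem plus a triangle inequality. One small clarification on your ``main obstacle'': the $\Y$-norm \eqref{eq:norm:Y} subtracts $f(\sad)$ in the second term and has bounded weight $\alpha(\alpha-1)$ at $\alpha=\sad$, so the residual need only be \emph{small} at $\sad$ rather than vanish to first order, which is automatic because $\mv(\sad)=y_S$ is a critical point of $E$ and $\dE$ is $C^2$-close; consequently neither the reparameterization aligning $\sad$ nor the proximity of $\tilde\varphi_\delta(\sad)$ to the perturbed saddle is required, and the paper's $\dF$ keeps $\sad,\sA,\sB$ fixed rather than replacing them with perturbed analogs.
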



\section{Proofs: Stability of the MEP}
\label{sec:mep}
\setcounter{equation}{0}

In this section, we will perform a careful analysis on the linearized operator $\delta\mathscr{F}$ and provide a proof for Theorem \ref{theorem:Stability}. 
We first provide some notations and preliminary results, then give the explicit formulation of $\delta\mathscr{F}$ and show its continuity, and finally prove that the linearized MEP operator $\delta\mathscr{F}(\mv)$ is an isomorphism.

\subsection{Some preliminaries}
\label{sec:notations}

We first define a function that measures the gradient of the MEP along the tangent,
\begin{equation}
\label{eq:ml}
\ml(\alpha) :=
\frac{1}{L(\mv)^2} \Big( \nabla E\big(\mv(\alpha)\big), \mv'(\alpha) \Big)
\qquad \for\alpha\in[0,1]
.
\end{equation}
It will be heavily used throughout our analysis.
We immediately see from \eqref{mep1}
and the fact $|\mv'|\equiv L(\mv)$
that
\begin{equation}
\label{eq:ml1}
\nabla E\big(\mv(\alpha)\big) = \ml(\alpha) \mv'(\alpha) \quad\for\alpha\in[0,1],
\end{equation}
which is the equation often used to define the MEP \cite{MEP_MAP,ramquet2000critical}. 
We state some properties of $\ml$ in the following lemma, from which we see that $\sA=\ml'(0)$ and $\sB=\ml'(1)$.

\begin{lemma}
\label{lemma:lambda}
Let $\mv\in C^2\big( [0,1];\HH \big)$ be the solution of \eqref{mep}.
If {\bf (A)} is satisfied, then
\begin{itemize}
\item[(i)]
$\big(\ml'(0), \mv'(0)\big)$, $\big(\ml'(\sad), \mv'(\sad)\big)$ and $\big(\ml'(1), \mv'(1)\big)$ are eigenpairs of the Hessians $\nabla^2 E(y^A_M)$, $\nabla^2 E(y_S)$ and $\nabla^2 E(y^B_M)$, respectively;
\item[(ii)]
$\ml'(0)>0,~\ml'(1)>0$ and $\ml'(\sad)<0$; and 
\item[(iii)]
there exist positive constants $\underline{c},~\bar{c}$ depending only on $\mv$, such that
\begin{equation}
\label{eq:mlequ}
    \underline{c} \leq \left\lvert \frac{\ml(\alpha)}{\alpha(\alpha-\sad)(\alpha-1)} \right\rvert \leq \bar{c} \qquad \forall~\alpha \in (0,\sad)\cup(\sad,1) .
\end{equation}
\end{itemize} 
\end{lemma}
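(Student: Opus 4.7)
The plan is to extract all three claims from a single differentiation of the identity \eqref{eq:ml1}, combined with the spectral information about the critical points encoded in assumption \textbf{(A)}.

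For (i), I would differentiate \eqref{eq:ml1} in $\alpha$ (permitted since $E\in C^3$ and $\mv\in C^2$) to obtain
\begin{equation*}
\nabla^2 E(\mv(\alpha))\mv'(\alpha) = \ml'(\alpha)\mv'(\alpha) + \ml(\alpha)\mv''(\alpha) \qquad \forall\,\alpha\in[0,1].
\end{equation*}
At each of $\alpha=0,\sad,1$ the point $\mv(\alpha)$ is critical, so \eqref{eq:ml1} together with $\mv'(\alpha)\neq 0$ forces $\ml(\alpha)=0$ there; substituting back yields the three eigenpair identities in (i).

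For (ii), at the minimizers I would take the inner product of the eigenpair identity with $\mv'(0)$ and $\mv'(1)$ respectively, and use positive definiteness of $\nabla^2 E(y^A_M)$ and $\nabla^2 E(y^B_M)$ to conclude $\ml'(0),\ml'(1)>0$. For the saddle, I would use that $E\circ\mv$ attains its maximum at $\alpha=\sad$ (noted in the discussion preceding \textbf{(A)}), so $(E\circ\mv)''(\sad)\leq 0$; expanding this second derivative and using $\nabla E(y_S)=0$ gives $\ml'(\sad)|\mv'(\sad)|^2\leq 0$, hence $\ml'(\sad)\leq 0$. To upgrade to strict inequality I would invoke that $y_S$ is a strictly index-$1$ saddle: every eigenvalue of $\nabla^2 E(y_S)$ is either the strictly negative $\lambda_1$ or uniformly positive, so the non-positive eigenvalue $\ml'(\sad)$ must coincide with $\lambda_1<0$.

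For (iii), I would first observe that $\ml$ vanishes exactly at $\alpha=0,\sad,1$: one direction is (i), and conversely $\ml(\alpha)=0$ together with \eqref{eq:ml1} and $\mv'(\alpha)\neq 0$ forces $\nabla E(\mv(\alpha))=0$, which by \textbf{(A)} restricts $\alpha$ to these three points. Hence the quotient $g(\alpha):=\ml(\alpha)/(\alpha(\alpha-\sad)(\alpha-1))$ is continuous and nowhere zero on $(0,\sad)\cup(\sad,1)$. A Taylor expansion at each of $0,\sad,1$, together with the signs from (ii), yields the three (two-sided) limits
\begin{equation*}
\lim_{\alpha\to 0}g(\alpha)=\frac{\ml'(0)}{\sad}, \qquad \lim_{\alpha\to\sad}g(\alpha)=\frac{\ml'(\sad)}{\sad(\sad-1)}, \qquad \lim_{\alpha\to 1}g(\alpha)=\frac{\ml'(1)}{1-\sad},
\end{equation*}
all of which are strictly positive. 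Thus $g$ extends continuously to a strictly positive function on the compact set $[0,1]$, producing constants $\underline c,\bar c>0$ with $\underline c\leq g(\alpha)\leq\bar c$ and, a fortiori, \eqref{eq:mlequ}.

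The main obstacle is the strict sign $\ml'(\sad)<0$: the variational/maximum argument only delivers $\leq 0$, and ruling out equality genuinely requires the quantitative spectral gap built into the definition of an index-$1$ saddle, so that no eigenvalue of $\nabla^2 E(y_S)$ accumulates at $0$. Once this sign is pinned down, (iii) reduces to a continuity-on-a-compact-interval argument once the three boundary limits are computed.
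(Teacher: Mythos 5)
Your proposal is correct and follows essentially the same path as the paper's proof: differentiate \eqref{eq:ml1}, evaluate at the critical points where $\ml$ vanishes to get the eigenpairs, use strong stability of the minimizers for the positive signs, use the index-$1$ structure of $y_S$ to upgrade $\ml'(\sad)\leq 0$ to strict inequality, and conclude (iii) by continuity and the absence of zeros of $\ml$ away from $\{0,\sad,1\}$. The only cosmetic difference is in obtaining $\ml'(\sad)\leq 0$: you use the second-derivative test for the interior maximum of $E\circ\mv$, whereas the paper first establishes that $\ml>0$ on $(0,\sad)$ and $\ml<0$ on $(\sad,1)$ (from $\ml'(0),\ml'(1)>0$, continuity, and the fact that $\ml$ vanishes only at the three critical points) and then reads off the non-positivity of $\ml'(\sad)$ from this sign change; both are valid and of comparable length.
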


\begin{proof}
Since the gradient vanishes at the critical points, we have $\ml(0)=\ml(\sad) = \ml(1) = 0$. 
Taking the derivative with respect to $\alpha$ on both sides of \eqref{eq:ml1}, we obtain
\begin{eqnarray}
\label{eq1}
\nabla^2 E\big(\mv(\alpha)\big) \mv'(\alpha) = \ml'(\alpha) \mv'(\alpha) + \ml(\alpha) \mv'' (\alpha).
\end{eqnarray}
Note that the second term on the right-hand side of \eqref{eq1} vanishes when $\alpha=0,\sad,1$, hence
\begin{eqnarray*}
\nabla^2 E\big(\mv(\alpha)\big) \mv' (\alpha) = \ml'(\alpha) \mv'(\alpha) \qquad{\rm for} ~\alpha=0,\sad,1. 
\end{eqnarray*}

We then obtain $\ml'(0)>0$ and $\ml'(1)>0$ from assumption ${\bf (A)}$ that the end points are strongly stable minimizers.
From the definition \eqref{eq:ml} and the MEP equation \eqref{mep}, we see that $\ml(\alpha) = 0$ if and only if $\mv(\alpha)$ is a critical point. 
Since $y^A_M,~y_S$ and $y^B_M$ are the only three critical points on the MEP (from assumption {\bf (A)}), we can deduce from $\ml'(0)>0,~\ml'(1)>0$ and the continuity of $\ml$ that $\ml(\alpha)>0$ for $\alpha\in(0,\sad)$ and $\ml(\alpha)<0$ for $\alpha\in(\sad,1)$.
This implies $\ml'(\sad) \leq 0$. Invoking the fact that $\ml'(\sad)$ is an eigenvalue and assumption {\bf (A)} that $y_S$ is an index-1 saddle, we see that the only possibility is $\ml'(\sad) < 0$.
	
Finally, (iii) is an immediate consequence of (ii) and the fact that $\ml$ has no roots other than $\alpha = 0, \sad, 1$.
\end{proof}

\begin{lemma}
\label{le:X-Y}
If $f\in \X\subset \Y$, then we have $\lVert f\rVert_{\Y}  \leq 3 \lVert f \rVert_{\X}$.
\end{lemma}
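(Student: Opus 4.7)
The proof is short and rests on two elementary bounds, one for each of the two terms in the $Y$-norm. The plan is to use the boundary conditions $f(0)=f(1)=0$ together with the fundamental theorem of calculus to absorb the singular weights $\alpha^{-1}(\alpha-1)^{-1}$ and $(\alpha-\sad)^{-1}$ into a $C$-norm of the derivative $f'$, which is controlled by $\|f\|_\X$.

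For the first term, the obstacle is that the obvious bound $|f(\alpha)|\le\alpha\|f'\|_C$ (via $f(\alpha)=\int_0^\alpha f'(s)\dd s$) only controls the singularity at $0$, not at $1$, and symmetrically for the bound anchored at $1$. I would handle this by splitting the interval at $\alpha=1/2$. On $(0,1/2]$ I use the anchor $f(0)=0$ to get $|f(\alpha)|\le \alpha\|f'\|_C$, and since $|\alpha(\alpha-1)|=\alpha(1-\alpha)\ge \alpha/2$ on this subinterval, the quotient is bounded by $2\|f'\|_C$. On $[1/2,1)$ I use the anchor $f(1)=0$ to get $|f(\alpha)|\le(1-\alpha)\|f'\|_C$, and $|\alpha(\alpha-1)|\ge(1-\alpha)/2$ again yields the bound $2\|f'\|_C$. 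Combining the two halves,
\begin{equation*}
\left\lVert \frac{f(\alpha)}{\alpha(\alpha-1)}\right\rVert_{\Cz((0,1);\HH)} \leq 2\,\lVert f'\rVert_{\Cz([0,1];\HH)}.
\end{equation*}

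For the second term, I would simply apply the mean value inequality for Banach-space-valued $C^1$ functions: $|f(\alpha)-f(\sad)|\le \|f'\|_{\Cz([0,1];\HH)}\,|\alpha-\sad|$, so
\begin{equation*}
\left\lVert \frac{f(\alpha)-f(\sad)}{\alpha-\sad}\right\rVert_{\Cz([0,\sad)\cup(\sad,1];\HH)} \leq \lVert f'\rVert_{\Cz([0,1];\HH)}.
\end{equation*}
Adding the two estimates yields $\lVert f\rVert_\Y \le 3\lVert f'\rVert_{\Cz([0,1];\HH)} \le 3\lVert f\rVert_\X$, which is the claim. (The inclusion $\X\subset\Y$ is automatic since $f\in\X$ is $C^1$, hence continuous, vanishes at the endpoints, and is differentiable everywhere, in particular at $0,\sad,1$.) The only step requiring care is the interval split in the first estimate; everything else is the fundamental theorem of calculus.
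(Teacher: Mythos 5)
Your proof is correct and follows essentially the same route as the paper's: the paper likewise bounds the second term by $\lVert f\rVert_\X$ via the fundamental theorem of calculus anchored at $\sad$, and bounds the first term by $2\lVert f\rVert_\X$ by splitting at $\alpha=1/2$ and anchoring at $f(0)=0$ and $f(1)=0$ respectively. The only cosmetic difference is that you bound by $\lVert f'\rVert_{\Cz}$ rather than the (slightly larger) $\lVert f\rVert_\X$, which is a harmless sharpening.
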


\begin{proof}
For any $f\in \X$, we have
\begin{align}
\label{eq:psad}
\left|\frac{f(\alpha)-f(\sad)}{\alpha-\sad}\right| 
&= \left| \frac{\int_{\sad}^{\alpha} f'(s) \dd s}{\alpha-\sad} \right| \leq \| f\|_{\X}
\qquad{\rm for}~\alpha\in[0,1]\backslash\{\sad\}.
\end{align}
Using $f(0)=0$, we have
\begin{align*}
\left|\frac{f(\alpha)}{\alpha(\alpha-1)}\right| &= \left| \frac{\int_0^{\alpha} f'(s) \dd s}{\alpha(\alpha-1)} \right| \leq 2 \| f\|_{\X} 
\qquad{\rm for}~\alpha\in\big(0,\frac{1}{2}\big].
\end{align*}
An analogous estimate holds for $\alpha\in [\frac{1}{2},1)$, which together with \eqref{eq:psad} yields the result.
\end{proof}

\vskip 0.2cm

We then rewrite \eqref{def:F} as
\begin{align}
\label{def:F2}
    \mathscr{F}(\varphi) = \nabla E(\varphi) - \frac{|\varphi'|}{L(\varphi)} P_{\varphi'} \tau(\varphi) + (\sA +\sB) \rho(\varphi) \frac{\varphi'}{|\varphi'|},
\end{align}
where $\tau : \U \rightarrow \X$ and $\rho: \U\rightarrow \Co([0,1];\R)$ are given by
\begin{align}\label{def:tau}
    \tau(\varphi)(\alpha) &:= \nabla E\big(\varphi(\alpha)\big) - \frac{\alpha(\alpha-1)}{\sad(\sad-1)} \nabla E\big(\varphi(\sad)\big) \qquad{\rm and}
     \\[1ex]
     \label{def:rho}
    \rho(\varphi)(\alpha) &:= \Gamma(\varphi)(\alpha) -  \frac{\alpha(\alpha-1)}{\sad(\sad-1)} \Gamma(\varphi)(\sad)
    \qquad{\rm for}~\alpha\in[0,1].
\end{align}

Now we are ready to prove Lemma \ref{le:range} and Lemma \ref{le:MEP_equ}, which state that the range of $\mathscr{F}$ is in $\Y$ and that the problem \eqref{mep} is equivalent to \eqref{pb:mep}, respectively.
For simplicity of notation, we suppress the dependence on $\alpha$ in the rest of this paper, whenever it is clear from the context.

\begin{proof}
[{\it Proof of Lemma \ref{le:range}}]
%
Since $\rho(\varphi)(0)=\rho(\varphi)(1) = \tau(\varphi)(0)=\tau(\varphi)(1) = 0$ for any $\varphi\in \U$, we see from \eqref{def:F2} that $\mathscr{F}(\varphi)(0) =\mathscr{F}(\varphi)(1) = 0$.

From the regularity of $E$, we have $\nabla E(\varphi),~\tau(\varphi) \in\X \subset \Y$.
Since $\varphi\in C^1$ and $\varphi'\neq 0$ for any $\varphi\in \U$, we have that $\frac{\varphi'}{|\varphi'|}\in C([0,1];\HH)$.
By using the continuity of $\varphi'$, the regularity of $E$, and the facts that $\tau(\varphi)(\sad)=0$ and $\rho(\varphi)(\sad)=0$, we have that the limit
\begin{equation*}
\lim_{\alpha \rightarrow \sad} \frac{\mathscr{F}(\varphi)(\alpha) - \mathscr{F}(\varphi)(\sad)}{\alpha-\sad}
\end{equation*}
exists. 
This implies that $\mathscr{F}(\varphi)$ is differentiable at $\sad$.
By an analogous argument, we see that $\mathscr{F}(\varphi)$ is also (one-sided) differentiable at $0,1$. 
Therefore, we have $\mathscr{F}(\varphi) \in\Y$.
\end{proof}

\begin{proof}
[{\it Proof of Lemma \ref{le:MEP_equ}}]
Let $\mv$ be the solutions of \eqref{mep}. Taking derivative of on both sides of \eqref{mep2} yields
\begin{equation*}
    |\mv'(\alpha)| - L(\mv) = 0 \qquad{\rm for}~\alpha\in[0,1].
\end{equation*}
Together with $\nabla E\big(\mv(\sad)\big) = 0$ and \eqref{mep}, we obtain $\mathscr{F}(\mv) = 0$, i.e., $\mv$ solves \eqref{pb:mep}.

If $\varphi$ is a solution of \eqref{pb:mep}, we see from the first term in \eqref{def:F} that $\varphi$ satisfies \eqref{mep1}. After re-parameterization by normalized arc length, \eqref{mep2} is satisfied while \eqref{mep1} still holds.
\end{proof}

\subsection{The linearized operator}
\label{sec:linearized}

For $\varphi\in \U$, the first variation $\delta\mathscr{F}(\varphi) : \X \rightarrow \Y$ can be obtained by a direct calculation
\begin{align}
\label{deltaF}
\delta \mathscr{F}(\varphi) \tv  &=  \nabla^2 E(\varphi)\tv  - \frac{|\varphi'|}{L(\varphi)} P_{\varphi'} \delta\tau(\varphi)\tv - \delta \left(\frac{|\varphi'|}{L(\varphi)} P_{\varphi'}\right)\tv \tau(\varphi) 
\nonumber \\[1ex]
& \hskip 3cm + (\sA+\sB) \left( \delta\rho(\varphi)\tv \frac{\varphi'}{|\varphi'|} + \rho(\varphi) \frac{ P_{\varphi'}^\perp \tv }{|\varphi'|}\right),
\end{align}
where 
\begin{align*}
    \delta \left( \frac{|\varphi'|}{L(\varphi)} P_{\varphi'}\right)\tv & = \frac{|\varphi'|}{L(\varphi)}\left( P_{\varphi'}^\perp \frac{\tv'(\varphi')^T}{|\varphi'|^2} + \frac{\varphi'(\tv')^T}{|\varphi'|^2}P_{\varphi'}^\perp \right)
    \\[1ex]
	&\hskip 1cm + \frac{1}{L(\varphi)} \left( \frac{\varphi'^T\tv'}{|\varphi'|} - \frac{|\varphi'|}{L(\varphi)} \int_0^1\frac{\varphi'(s)^T\tv'(s)}{|\varphi'(s)|}\dd s   \right) P_{\varphi'} ,
\end{align*}
and
\begin{align*}
	\big( \delta\tau(\varphi)\tv \big)(\alpha) &= \nabla^2 E\big(\varphi(\alpha)\big)\tv(\alpha) - \frac{\alpha(\alpha-1)}{\sad(\sad-1)} \nabla^2 E\big(\varphi(\sad)\big)\tv(\sad)\qquad{\rm for}~\alpha\in[0,1],
	\\[1ex]
	\delta\rho(\varphi)\tv &= \delta\Gamma(\varphi)\tv - \frac{\alpha(\alpha-1)}{\sad(\sad-1)} \big( \delta\Gamma(\varphi)\tv\big)(\sad),
	\\[1ex]
	\delta\Gamma(\varphi)\tv (\alpha) &= \int_0^{\alpha} \frac{\varphi'(s)^T\tv'(s)}{|\varphi'(s)|}\dd s - \alpha \int_0^1\frac{\varphi'(s)^T\tv'(s)}{|\varphi'(s)|}\dd s \qquad{\rm for}~\alpha\in[0,1].
\end{align*}
From the above expressions, we can easily show that $\mathscr{F}$ and $\delta\mathscr{F}$ are both Lipschitz in any bounded $C^1$-neighborhood of the MEP $\mv$.
This completes the proof of Theorem \ref{theorem:Stability} (i).

\subsection{$\delta\mathscr{F}(\mv)$ is an isomorphism}
\label{sec:LMEP}

The first variation \eqref{deltaF} at the MEP $\varphi=\mv$ can be simplified considerably by using \eqref{eq:ml1},
\begin{multline}
\label{eq:LMEPO}
\delta\mathscr{F}(\mv)\tv (\alpha) = 
P^\perp_{\mv'(\alpha)} \Big( \nabla^2 E\big(\mv(\alpha)\big)\tv(\alpha) - \ml(\alpha) \tv'(\alpha) \Big) + \frac{\alpha(\alpha-1)}{\sad(\sad-1)}P_{\mv'(\alpha)} \nabla^2 E\big(\mv(\sad)\big)\tv(\sad)
\\[1ex]
+ \left( (\sA+\sB)\left(\big(\delta\Gamma(\mv)\tv\big)(\alpha) - \frac{\alpha(\alpha-1)}{\sad(\sad-1)} \big(\delta\Gamma(\mv)\tv\big)(\sad) \right) - \ml(\alpha)\big(\delta\Gamma(\mv)\tv\big)'(\alpha) \right) \frac{\mv'(\alpha)}{\lvert \mv'(\alpha) \rvert} ,
~~
\end{multline}
for $\alpha\in[0,1]$, where $\ml$ is defined in \eqref{eq:ml}.

In order to show that the operator $\delta\mathscr{F}(\mv)$ is an isomorphism, i.e. Theorem \ref{theorem:Stability} (ii), we must establish the existence and stability of a solution for the following problem: 
Given $\rv\in\Y$, find $\tv\in\X$ such that
\begin{equation}
\label{eq:var}
\delta \mathscr{F}(\mv)\tv = \rv.
\end{equation}
The proof consists of three steps. 
First, we restrict the problem \eqref{eq:var} in the subspace perpendicular to the tangent direction $\mv'$ and derive an ODE system on a Hilbert space.
Secondly, we show that $\delta\mathscr{F}(\mv)$ is a bijection by proving that the ODE system has a unique solution. 
Finally, we derive the stability \eqref{eq:stab} by using the boundedness of $\delta\mathscr{F}(\mv)$ and the open mapping theorem.

As the linearized MEP operator \eqref{eq:LMEPO} has completely different behavior in the tangent direction $\mv'$ and in the subspace $\mv'^\perp$, we will split the problem into these two subspaces.
Let $\tad := \mv'/|\mv'|$.
We represent the trial function $\psi$ in \eqref{eq:var} by
\begin{equation}
\label{eq:tv}
\tv(\alpha) = \beta_0(\alpha) \tad (\alpha) + \tv_\perp(\alpha) \qquad{\rm for}~ \alpha\in[0,1] ,
\end{equation}
with $(\tv_\perp,\tad)=0$.

We first consider the problem \eqref{eq:var} in the subspace perpendicular to the tangent direction $\xi_0$. 
By substituting \eqref{eq:tv} into \eqref{eq:var} and applying the projection $P_{\mv'}^\perp$ to both sides, we can obtain by a direct calculation with \eqref{eq1} that
\begin{equation}
\label{pb:perp}
\ml(\alpha) \tv_\perp ' (\alpha) 
= \big( B(\alpha)+\ml(\alpha)D(\alpha) \big) \tv_\perp(\alpha) - P_{\mv'(\alpha)}^\perp \rv(\alpha)
\qquad{\rm for}~\alpha\in[0,1],
\end{equation}
where $B(\alpha) := P_{\mv'(\alpha)}^\perp \nabla^2E\big(\mv(\alpha)\big) P_{\mv'(\alpha)}^\perp$ 
and $D(\alpha):\HH\rightarrow\HH$ is given by
\begin{align*}
D(\alpha) y = - \big(y,\tad'(\alpha)\big)\tad(\alpha) - \big(y,\tad(\alpha)\big)\tad'(\alpha) .
\end{align*}
Note that the ODE \eqref{pb:perp} is independent of $\beta_0$.

Let
\begin{align*}
\HH^\perp(\alpha) := \big\{ y\in\HH:~\big(y,\mv'(\alpha)\big) = 0 \big\}
\qquad{\rm for}~ \alpha\in[0,1].
\end{align*}
Since $\mv\in C^2\big([0,1];\HH\big)$, there exists a Hilbert space $\Nm$ and a family of isometries $Q(\alpha):\Nm\rightarrow \HH^\perp(\alpha)$ for $\alpha\in[0,1]$ satisfying $Q \in C^1\big([0,1];\LL(\Nm,\HH)\big)$. 
The image of $Q(\alpha)$ is $\HH_\perp(\alpha)$, which is a subspace of $\HH$.
Let $\beta_\perp(\alpha) := Q(\alpha)^{-1}\tv_\perp(\alpha)$ for $\alpha\in[0,1]$. 
Then the ODE \eqref{pb:perp} can be rewritten as
\begin{equation}
\label{pb:beta}
\left\{
\begin{array}{ll}
\ml(\alpha) \beta_\perp ' (\alpha) = \OA(\alpha)\beta_\perp (\alpha)+ g(\alpha)\quad &\rm{for}~~ \alpha\in [0,1], \\[1ex]
\beta_\perp (0) = \beta_\perp (1) = {0},&
\end{array}
\right.
\end{equation}
where 
\begin{align}
\label{def:A}
A(\alpha) &:= Q(\alpha)^{-1}B(\alpha)Q(\alpha) - \ml(\alpha) \big( Q(\alpha)^{-1} D(\alpha)Q(\alpha) -   Q(\alpha)^{-1} Q'(\alpha)\big),
\\[1ex]
\label{def:g}
g(\alpha) &:= - Q(\alpha)^{-1}P_{\mv'(\alpha)}^\perp\rv(\alpha).
\end{align}
The advantage of \eqref{pb:beta} over \eqref{pb:perp} is that the unknown $\beta_\perp(\alpha)$ now lies in a space $\Nm$ that is independent of $\alpha$.

To write the problem in the tangent direction $\mv'$, we substitute \eqref{eq:tv} into \eqref{eq:var} and take the inner product with $\tad$ on both sides to derive the equation for $\beta_0$
\begin{multline}
\label{eq:beta0}
\ml(\alpha) \beta'_0(\alpha) 
= (\sA+\sB) \left( \beta_0(\alpha) - \frac{\alpha(\alpha-1)}{\sad(\sad-1)}\beta_0(\sad)\right) + \ml'(\sad)\frac{\alpha(\alpha-1)}{\sad(\sad-1)} \big( \tad(\alpha),\tad(\sad) \big)\beta_0(\sad) 
\\[1ex]
\qquad + \frac{\alpha(\alpha-1)}{\sad(\sad-1)} \big(\nabla^2E(\mv(\sad))\tv_\perp(\sad),\tad(\alpha)\big) - \big(\rv(\alpha),\tad(\alpha)\big) + G(\tv_\perp)(\alpha) 
\qquad{\rm for}~\alpha \in[0,1],
\end{multline}
where $G:\Co([0,1];\HH)\rightarrow \Cz([0,1];\R)$ is given by
\begin{align*}
& G(\tv_\perp)(\alpha) :=  (\sA+\sB)\left( \hat{G}(\tv_\perp)(\alpha) - \frac{\alpha(\alpha-1)}{\sad(\sad-1)}\hat{G}(\tv_\perp)(\sad)\right) - \ml(\alpha) \hat{G}(\tv_\perp)'(\alpha) 
\qquad{\rm with}
\\[1ex]
& \hat{G}(\tv_\perp)(\alpha) :=  \int_0^{\alpha} \big(\tv'_\perp(s), \tad(s)\big) \dd s - \alpha \int_0^1\big(\tv'_\perp(s), \tad(s)\big)\dd s.
\end{align*}
Since the tangential equation \eqref{eq:beta0} depends on $\tv_{\perp}$ (and hence $\beta_{\perp}$) but \eqref{pb:beta} does not depend on $\beta_0$, we will first focus on the problem \eqref{pb:beta} in the subspace $\mv'^\perp$.

\vskip 0.1cm

We first restrict the problem \eqref{pb:beta} to the interval $[\sad,1]$.
The interval $[0,\sad]$ can be analyzed with the same arguments. 
Thus, we consider the following problem:
\begin{equation}
\label{pb:beta-2}
\left\{
\begin{array}{ll}
\ml(\alpha) \beta_\perp ' (\alpha) = \OA(\alpha)\beta_\perp (\alpha) + g(\alpha)\quad &\rm{for}~~ \alpha\in [\sad,1], 
\\[1ex]
\beta_\perp (1) = {0}.&
\end{array}
\right.
\end{equation}
The main difficulty lies in that the prefactor $\ml(\alpha)$ vanishes at the critical points, $\ml(\sad)=\ml(1)=0$, which makes the standard arguments for ODE analysis fail. 
In our proof, we will first analyze the behavior of \eqref{pb:beta-2} near the critical points via semigroup theory, and then show the existence of $\beta_\perp$ over $[\sad,1]$ by a perturbation argument.

For completeness of the presentation, we briefly review the semigroup theory, starting with the definition of a sectorial operator.

\begin{definition}
\cite[Definition 2.0.1]{parasemigroup}
A linear operator $A$ on a Banach space $\mathcal{B}$ is a sectorial operator if there are constants $\omega\in\R,~\theta\in(\frac{\pi}{2},\pi),~M>0$ such that
\begin{equation}
\label{eq:sectorial}
\begin{aligned}
&\rho( A ) \supset S_{\omega,\theta}:= \{ \lambda \in \C : \lambda \neq \omega,~| {\rm arg}(\lambda - \omega) | < \theta \}
\qquad{\rm and}
\\[1ex]
& \|(\lambda I- A)^{-1}\|_{\LL(\mathcal{B})} \leq \frac{M}{|\lambda-\omega|}
\qquad\forall~\lambda\in S_{\omega,\theta}.
\end{aligned}
\end{equation}
\end{definition}

For $t>0$, \eqref{eq:sectorial} allows us to define a linear bounded operator $e^{tA}$ on $\mathcal{B}$, by means of the Dunford integral
\begin{equation}
\label{eq:semigroup1}
e^{t A} = \frac{1}{2\pi i} \oint_{\omega+\gamma_{r,\theta'}} e^{t\lambda} (\lambda I - A)^{-1} \dd\lambda,
\end{equation}
where $r>0$, $\theta' \in (\frac{\pi}{2},\theta)$ and $\gamma_{r,\theta'}$ is a contour $\{\lambda\in\C:~ |\text{arg}\lambda| = \theta',~|\lambda|\geq r\} \cup \{\lambda\in\C:~ |\text{arg}\lambda| \leq \theta',~|\lambda|= r\}$ that oriented counterclockwise. 
We also set
\begin{equation}
\label{eq:semigroup2}
e^{0 A} x = x \qquad \forall~x\in\mathcal{B}.
\end{equation}

\begin{definition}\cite[Definition 2.0.2]{parasemigroup}
Let $A\in\LL(\mathcal{B})$ be a sectorial operator. 
The family $\{e^{t A}:~t\geq 0\}$ defined by \eqref{eq:semigroup1} and \eqref{eq:semigroup2} is said to be the analytic semigroup generated by $A$ in $\mathcal{B}$.
\end{definition}

The following lemma presents some basic properties of the analytic semigroup.

\begin{lemma}
\cite[Proposition 2.0.1]{parasemigroup}
\label{le:semigroup}
Let $A\in\LL(\mathcal{B})$ be a sectorial operator satisfying \eqref{eq:sectorial} and $e^{tA}$ be the analytic semigroup generated by $A$, then
\begin{align*}
\|e^{t A}\|_{\LL(\mathcal{B})} \leq Ce^{\omega t}
\qquad{\rm and}\qquad
\frac{\dd}{\dd t}e^{tA} = A e^{tA}
\qquad \forall~t>0.
\end{align*}
\end{lemma}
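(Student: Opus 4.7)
The plan is to work directly with the Dunford integral representation \eqref{eq:semigroup1} and extract both statements from the sectoriality estimate \eqref{eq:sectorial}. First I would change variables $\mu = \lambda - \omega$ to pull the exponential factor $e^{\omega t}$ outside the integral, reducing everything to the case $\omega = 0$ with the shifted resolvent bound $\|(\mu I - (A - \omega I))^{-1}\|_{\LL(\mathcal{B})} \leq M/|\mu|$; this makes the exponential part of the target bound transparent, leaving only a uniform-in-$t$ estimate to handle.

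For the norm bound, I would split the translated contour $\gamma_{r,\theta'}$ into the two rays $\mu = \rho e^{\pm i\theta'}$ with $\rho \geq r$, and the arc $\mu = re^{i\phi}$ with $|\phi| \leq \theta'$. Since $\theta' \in (\pi/2,\theta)$, the cosine $\cos\theta'$ is strictly negative, so each ray contributes at most
\begin{equation*}
\int_r^{\infty} \frac{M}{\rho}\, e^{t\rho\cos\theta'}\dd\rho,
\end{equation*}
while the arc contributes at most $2\theta' M e^{tr}$. Choosing $r = 1/t$ — the standard trick for analytic semigroups — makes both contributions uniformly bounded in $t>0$, yielding $\|e^{tA}\|_{\LL(\mathcal{B})} \leq C e^{\omega t}$ with $C$ depending only on $M$ and $\theta'$.

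For the derivative identity, I would differentiate under the integral sign in \eqref{eq:semigroup1}; the exponential decay on the rays, together with the resolvent bound, majorises $\lambda e^{t\lambda}(\lambda I - A)^{-1}$ on the contour and justifies the interchange. The resolvent identity $\lambda(\lambda I - A)^{-1} = I + A(\lambda I - A)^{-1}$ then splits
\begin{equation*}
\frac{\dd}{\dd t} e^{tA} = \frac{1}{2\pi i}\oint_{\omega + \gamma_{r,\theta'}} \lambda e^{t\lambda}(\lambda I - A)^{-1} \dd\lambda
\end{equation*}
into $Ae^{tA}$ plus $\frac{1}{2\pi i}\oint e^{t\lambda}\dd\lambda$, and the second integral vanishes by Cauchy's theorem, since its integrand is entire and the contour can be deformed to infinity within the sector where $e^{t\lambda}$ decays exponentially.

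I expect the main obstacle to be the $t$-dependent choice of the contour radius $r$ needed to keep the arc piece bounded as $t \downarrow 0$; without this trick one only obtains a norm bound that blows up near the origin. Once the contour trick is in place, the remaining justifications (differentiating under the integral and closing the contour at infinity) are routine dominated-convergence arguments built on the same exponential tail estimates.
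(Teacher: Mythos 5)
The paper does not prove this lemma; it cites it verbatim as \cite[Proposition 2.0.1]{parasemigroup} (Lunardi's book on analytic semigroups) and relies on it as a known fact. Your proof is the standard textbook argument for exactly that proposition, so there is no divergence in approach to report — you have simply supplied the proof that the cited reference contains.

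For the record, your argument is correct, and the one step that actually needs care you handle properly: the $t$-dependent choice $r = 1/t$ of the inner radius is what prevents the arc contribution $2\theta' M e^{tr}$ from blowing up as $t\downarrow 0$, and it simultaneously renders the ray integral $\int_r^\infty (M/\rho)\,e^{t\rho\cos\theta'}\,\mathrm{d}\rho$ $t$-independent after the substitution $u=t\rho$ (here $\cos\theta'<0$ since $\theta'\in(\pi/2,\theta)$ is essential). Likewise the use of the resolvent identity $\lambda(\lambda I-A)^{-1}=I+A(\lambda I-A)^{-1}$ together with $\oint e^{t\lambda}\,\mathrm{d}\lambda=0$ on the contour (closable at infinity in the left half-plane) is exactly how $\frac{\mathrm{d}}{\mathrm{d}t}e^{tA}=Ae^{tA}$ is obtained. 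One remark worth appreciating in the paper's context: the operator $A$ here is bounded, so $e^{tA}$ could also be defined by the power series, and $\frac{\mathrm{d}}{\mathrm{d}t}e^{tA}=Ae^{tA}$ would then be elementary; but the power series only yields $\|e^{tA}\|\leq e^{t\|A\|}$, whereas the paper's subsequent estimates (e.g.\ \eqref{eq:4-3-6}, \eqref{eq:4-4-4}) genuinely need $\|e^{tA}\|\leq Ce^{\omega t}$ with $\omega<0$. That sharper bound requires precisely the contour argument you gave, so the Dunford-integral route is not optional here.
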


Now we come back to analyze \eqref{pb:beta-2}.
Define
\begin{align}
\label{def:omega}
\nonumber
& \omega_S := \inf\{\lambda:~\lambda\in\sigma\big( \nabla^2E(y_S) \big),~\lambda>0\}
\qquad{\rm and}
\\[1ex]
& \omega_B := \inf\{\lambda:~\lambda\in\sigma\big( \nabla^2E(y^B_M) \big),~\lambda\neq\sB\}.
\end{align}
Let $A(\alpha)$ be the operator given in \eqref{def:A},  the following lemma shows that $-A(\alpha)$ at the critical points are sectorial.

\begin{lemma}
\label{le:A}
Let $\mv\in C^2\big([0,1];\HH\big)$ be the solution of \eqref{mep}, $\ml$ be given by \eqref{eq:ml} 
and $A$ be the operator-valued function defined in \eqref{def:A}. 
If assumptions {\bf (A)} and {\bf (B)} are satisfied, then $-A(\sad)$ and $-A(1)$ are sectorial operators, and $A$ belongs to  $C^1\big([\sad,1];\LL(\Nm)\big)$.
\end{lemma}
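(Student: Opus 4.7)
The plan is to exploit the crucial simplification at the endpoints: since $\ml(\sad) = \ml(1) = 0$ by Lemma~\ref{lemma:lambda}, the definition \eqref{def:A} collapses to
\[
A(\sad) = Q(\sad)^{-1} B(\sad) Q(\sad), \qquad A(1) = Q(1)^{-1} B(1) Q(1),
\]
and because $Q(\alpha)$ is an isometry from $\Nm$ onto $\HH^\perp(\alpha)$, both the spectra and self-adjointness of $A(\sad)$ and $A(1)$ reduce to those of $B(\sad)$ and $B(1)$ acting on $\HH^\perp(\sad)$ and $\HH^\perp(1)$, respectively.

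First I would analyze $B(\sad)$. By Lemma~\ref{lemma:lambda}(i)--(ii), $\mv'(\sad)$ is an eigenvector of the self-adjoint Hessian $\nabla^2 E(y_S)$ with the unique negative eigenvalue $\ml'(\sad)$; hence $\HH^\perp(\sad)$ is an invariant subspace, and combining the index-1 saddle assumption~{\bf (A)} with the definition \eqref{def:omega} of $\omega_S$ yields $\sigma\bigl(\nabla^2 E(y_S)|_{\HH^\perp(\sad)}\bigr) \subset [\omega_S, \infty)$ with $\omega_S > 0$. An analogous argument using Lemma~\ref{lemma:lambda}(i) and assumption~{\bf (B)} (simple, isolated, lowest eigenvalue $\sB$ with eigenvector $\mv'(1)$) gives $\sigma\bigl(\nabla^2 E(y^B_M)|_{\HH^\perp(1)}\bigr) \subset [\omega_B, \infty)$ with $\omega_B > 0$.

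After these identifications, $A(\sad)$ and $A(1)$ are bounded self-adjoint operators on $\Nm$ with spectra contained in $[\omega_S, \infty)$ and $[\omega_B, \infty)$ respectively. Sectoriality of their negatives is then a standard consequence of the self-adjoint functional calculus: for a bounded self-adjoint $T$ on $\Nm$ with $\sigma(T) \subset [a, \infty)$, one has $\rho(-T) \supset \C \setminus (-\infty, -a]$ and $\|(\lambda I + T)^{-1}\|_{\LL(\Nm)} = 1/\mathrm{dist}(\lambda, \sigma(-T))$, which directly verifies \eqref{eq:sectorial} with center $-a$ and any opening angle $\theta \in (\pi/2, \pi)$.

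Finally, the $C^1$ regularity of $A$ on $[\sad, 1]$ follows by assembling \eqref{def:A} from its constituents: $E \in C^3$ together with $\mv \in C^2$ gives $\nabla^2 E(\mv(\cdot)) \in C^1$; the non-vanishing $C^1$ tangent $\mv'$ makes $P^\perp_{\mv'}$, $\tad$, and $|\mv'|^{-1}$ into $C^1$ objects; $Q, Q^{-1} \in C^1$ by hypothesis on the isometry family; and $\ml \in C^1$ from \eqref{eq:ml}. The main obstacle I anticipate is the $D$ term, which formally involves $\tad'$ and is therefore only continuous \emph{a priori}; here one should use additional regularity of $\mv$ (bootstrapped from \eqref{eq:ml1} together with $E \in C^3$, yielding $\mv \in C^3$ away from critical points) combined with the vanishing of $\ml$ at the endpoints to recover $C^1$ smoothness up to and including $\alpha = \sad$ and $\alpha = 1$.
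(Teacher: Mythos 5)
Your proof follows essentially the same route as the paper's — reduce to $B(\sad)$, $B(1)$ via the isometry $Q$ and the vanishing of $\ml$ at $\sad,1$, then invoke assumption {\bf (B)} and the index-1 saddle property to locate the spectrum above $\omega_S$, $\omega_B$ — but there are two noteworthy differences. For the sectoriality step, you exploit self-adjointness of $A(\sad)$ and $A(1)$ (inherited from $B$ through the unitary $Q$) and obtain the sharp resolvent identity $\|(\lambda I - (-T))^{-1}\| = 1/\mathrm{dist}(\lambda,\sigma(-T))$ from the spectral theorem; the paper instead uses the generic bounded-operator estimate $\|(\lambda I + A(\sad))^{-1}\| \le (|\lambda| - \|A(\sad)\|)^{-1}$ for large $|\lambda|$ and falls back on boundedness of the resolvent on the compact remainder of the sector. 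Both yield \eqref{eq:sectorial}; yours is arguably cleaner and makes the role of self-adjointness explicit.

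More substantively, you correctly flag the $D$-term in \eqref{def:A} as the obstruction to the $C^1$ claim: $D$ involves $\tad'$ and is a priori only continuous when $\mv\in C^2$. The paper does not address this at all — it merely asserts that $B\in C^1$, $Q\in C^1$ and $\ml(\sad)=\ml(1)=0$ imply $A\in C^1$, which is not self-evident since $\ml\cdot(Q^{-1}DQ - Q^{-1}Q')$ is a priori only a $C^1\times C^0$ product in the interior. Your proposed fix — bootstrap $\mv\in C^3$ away from critical points via \eqref{eq:ml1} (indeed, $\mv' = \pm L\,\nabla E(\mv)/|\nabla E(\mv)|$ and $\nabla E\circ\mv\in C^2$ give this), then use $\ml(\sad)=\ml(1)=0$ to push $C^1$ smoothness up to the endpoints — is the right idea, but the last step is not as automatic as you suggest: at $\alpha\to\sad,1$ the extra derivatives of $\tad$ obtained from the interior bootstrap may degenerate, and one must check that the linear vanishing of $\ml$ actually controls the potential blow-up of $(Q^{-1}DQ)'$. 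You have identified a genuine gap in the paper's presentation; to close it fully one should verify this rate argument rather than assert it.
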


\begin{proof}
We see from {\bf (B)} that $B(\sad)|_{\HH^\perp(\sad)}$ and $B(1)|_{\HH^\perp(1)}$ are positive definite operators, and $\sigma\big( B(\sad)|_{\HH^\perp(\sad)} \big) \subset [\omega_S,+\infty)$, $\sigma\big( B(1)|_{\HH^\perp(1)} \big) \subset [\omega_B,+\infty)$.
Since $Q(\alpha):\HH^\perp(\alpha)\rightarrow\Nm$ is an isometry, we have from $\ml(\sad)= \ml(1)= 0$ that
\begin{align*}
& \sigma\big(A(\sad)\big) = \sigma\big(Q(\sad)^{-1}B(\sad)Q(\sad)\big) \subset [\omega_S,+\infty)
\qquad{\rm and}
\\[1ex]
& \sigma\big(A(1)\big) = \sigma\big(Q(1)^{-1}B(1)Q(1)\big) \subset [\omega_B,+\infty).
\end{align*}
This leads to
\begin{equation*}
\rho( -A(\sad) ) \supset S_{-\omega_S,\frac{3}{4}\pi}
\qquad{\rm and}\qquad
\rho( -A(1) ) \supset S_{-\omega_B,\frac{3}{4}\pi}.
\end{equation*}
Since $A(\sad)\in \LL(\Nm)$ there exists a constant $C>0$ depending on $A(\sad)$ such that
\begin{align}
\label{eq:sec1}
& \| \big(\lambda I + A(\sad) \big)^{-1} \|_{\LL(\Nm)} \leq \frac{1}{|\lambda|-\|A(\sad)\|_{\LL(\Nm)}}\leq \frac{C}{\lvert \lambda + \omega_S \rvert} ,
\end{align}
for $\lambda \in S(-\omega_S,\frac{3}{4}\pi) $ and $|\lambda| \geq 2\|A(\sad)\|_{\LL(\Nm)}$. 
For $\lambda \in S(-\omega_S,\frac{3}{4}\pi) $ and $|\lambda| \leq 2\|A(\sad)\|_{\LL(\Nm)}$, $\| \big(\lambda I + A(\sad) \big)^{-1} \|_{\LL(\Nm)}$ has an upper bound and thus \eqref{eq:sec1} holds for some constant $C$ depending on $A(\sad)$. 
This implies that $-A(\sad)$ is a sectorial operator. A similar argument yields that $-A(1)$ is also a sectorial operator.
	
From $E\in C^3(\HH)$ and $\mv\in C^2\big([0,1];\HH\big)$, we see that $B(\alpha)$ is differentiable in $[0,1]$. 
Combining this with the facts that $Q \in C^1\big([0,1];\LL(\Nm,\HH)\big)$ and $\ml(\sad)=\ml(1)=0$, we have that the operator-valued function $A \in C^1\big([\sad,1];\LL(\Nm)\big)$. 
\end{proof}

The following two lemmas give the behavior of \eqref{pb:beta-2} near the critical points.

\begin{lemma}
\label{le:beta1}
Let $\mv\in C^2\big([0,1];\HH\big)$ be the solution of \eqref{mep} and $\ml$ be given by \eqref{eq:ml}. Assume that {\bf (A)} and {\bf (B)} are satisfied. 
Let $\delta\in(0,\frac{1-\sad}{2})$ and $h(\alpha) \in C\big([\sad,\sad+\delta];\Nm\big)$ be differentiable at $\sad$.
Then there is a unique $\beta \in C^1\big([\sad,\sad+\delta];\Nm\big)$ solving
\begin{equation}
\label{pb:beta1}
\ml(\alpha) \beta'(\alpha) = \OA(\sad) \beta(\alpha) + h(\alpha)
\qquad{\rm for}~\alpha\in [\sad,\sad+\delta].
\end{equation}
Moreover, there exists a positive constant $C_{\rm S}$ depending only on $\ml$ and $A(\sad)$, such that
\begin{equation}
\label{est:beta1}
	\lVert  \beta' \rVert_{\Cz([\sad,\sad+\delta];\Nm)} \leq C_{\rm S} \left( \left\lVert \frac{h(\alpha)-h(\sad)}{\alpha-\sad} \right\rVert_{\Cz((\sad,\sad+\delta];\Nm)} + \| h(\sad)\|_{\Nm} \right).
\end{equation}
\end{lemma}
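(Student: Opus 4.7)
The plan is to solve \eqref{pb:beta1} explicitly via a variation-of-constants formula built from the analytic semigroup $T(t) := e^{-tA(\sad)}$ generated by the sectorial operator $-A(\sad)$ (from Lemma \ref{le:A}). By Lemma \ref{le:semigroup} and $\sigma(A(\sad)) \subset [\omega_S, +\infty)$, we have $\|T(t)\|_{\LL(\Nm)} \leq Ce^{-\omega_S t}$ and $A(\sad)^{-1} = \int_0^{+\infty} T(t)\,\dd t$. The key singular-time substitution is
\begin{equation*}
K(\alpha,\tau) := \int_\tau^\alpha \frac{\dd s}{-\ml(s)},\qquad \sad < \tau \leq \alpha \leq \sad + \delta,
\end{equation*}
which by Lemma \ref{lemma:lambda}(ii)--(iii) is nonnegative, blows up logarithmically as $\tau \to \sad^+$, and is paired with a bounded ratio $|\tau-\sad|/|\ml(\tau)|$ on $[\sad,\sad+\delta]$. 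This produces the closed-form candidate
\begin{equation*}
\beta(\alpha) := -\int_{\sad}^{\alpha} T\bigl(K(\alpha,\tau)\bigr)\,\frac{h(\tau)}{-\ml(\tau)}\,\dd\tau,
\end{equation*}
whose convergence follows from the substitution $u = K(\alpha,\tau)$ that rewrites it as $\int_0^{+\infty} T(u)\,h(\tau(u))\,\dd u$, bounded by $C\omega_S^{-1}\|h\|_{\Cz([\sad,\sad+\delta];\Nm)}$. A direct differentiation (using $T'(u) = -A(\sad)T(u)$ and $\partial_\alpha K = 1/(-\ml(\alpha))$) verifies that $\beta$ solves the ODE.

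Next I would decompose $h(\tau) = h(\sad) + (h(\tau)-h(\sad))$. The contribution from $h(\sad)$, together with $\int_0^{+\infty} T(u)\,\dd u = A(\sad)^{-1}$, gives
\begin{equation*}
\beta(\alpha) = -A(\sad)^{-1} h(\sad) + R(\alpha) + (\text{exponentially small remainder}),
\end{equation*}
with $R(\alpha)$ the integral built from $h(\tau)-h(\sad)$; in particular $\beta(\sad) = -A(\sad)^{-1}h(\sad)$, matching the value forced by evaluating the ODE at $\sad$. Substituting back yields
\begin{equation*}
\ml(\alpha)\beta'(\alpha) = A(\sad)\beta(\alpha) + h(\alpha) = \bigl(h(\alpha)-h(\sad)\bigr) + A(\sad)R(\alpha),
\end{equation*}
so the first summand divided by $\ml(\alpha)$ is bounded by a constant times $\|(h-h(\sad))/(\cdot-\sad)\|_{\Cz((\sad,\sad+\delta];\Nm)}$ by Lemma \ref{lemma:lambda}(iii). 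For the second summand, I would use $A(\sad)T(u) = -T'(u)$ and $\partial_\tau T(K(\alpha,\tau)) = T'(K)/\ml(\tau)$ to obtain
\begin{equation*}
A(\sad)R(\alpha) = -\int_{\sad}^{\alpha} \partial_\tau T\bigl(K(\alpha,\tau)\bigr)\bigl(h(\tau)-h(\sad)\bigr)\,\dd\tau,
\end{equation*}
whose integrand factors as $\bigl|h(\tau)-h(\sad)\bigr|/|\tau-\sad|\cdot|\tau-\sad|/(-\ml(\tau))\cdot\|T'(K)\|_{\LL(\Nm)}$. Combining $\|T'(K)\|_{\LL(\Nm)} \leq Ce^{-\omega_S K}$, the bounded ratio $|\tau-\sad|/|\ml(\tau)|$, and the substitution $u = K(\alpha,\tau)$ gives $\|A(\sad)R(\alpha)\|_\Nm \leq C|\alpha-\sad|\cdot\|(h-h(\sad))/(\cdot-\sad)\|_{\Cz}$, and dividing by $|\ml(\alpha)|$ produces the uniform bound near $\sad$. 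Away from $\sad$, $|\ml(\alpha)|$ is bounded below, and the cruder estimate $\|\beta'(\alpha)\|_\Nm \leq C(\|\beta(\alpha)\|_\Nm + \|h(\alpha)\|_\Nm)$ is what contributes the $\|h(\sad)\|_\Nm$ term of \eqref{est:beta1}.

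Uniqueness follows from the same machinery applied to $\gamma := \beta_1-\beta_2$ of two $C^1$ solutions: evaluating the homogeneous ODE at $\sad$ forces $A(\sad)\gamma(\sad) = 0$ and hence $\gamma(\sad) = 0$ (since $A(\sad)$ is invertible), after which the variation-of-constants representation together with $\|T(K(\alpha,\tau))\|_{\LL(\Nm)} \to 0$ as $\tau \to \sad^+$ forces $\gamma \equiv 0$. The main technical obstacle I foresee is the careful bookkeeping at the singular endpoint $\sad$: establishing that $\beta$ is actually $C^1$ on the \emph{closed} interval $[\sad,\sad+\delta]$, not merely on $(\sad,\sad+\delta]$, requires passing $\alpha \to \sad^+$ inside each of the estimates above and verifying that the fine cancellation between the $|\ml|^{-1}$ singularity and the exponential decay $e^{-\omega_S K}$ is preserved uniformly, which is essentially where the sectoriality of $-A(\sad)$ does all the work.
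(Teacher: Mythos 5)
Your proposal is correct and, for the existence and the estimate, follows the same route as the paper: a variation-of-constants formula built from the analytic semigroup $T(t)=e^{-tA(\sad)}$, with the improper integral over $(\sad,\alpha]$ made convergent by the exponential decay of $T$ against the logarithmic divergence of $K(\alpha,\tau)$ as $\tau\to\sad^+$. (The paper normalizes first, setting $\tilde\beta=\beta-\beta(\sad)$, $\tilde h=h-h(\sad)$, and solves for $\tilde\beta$; you absorb the constant directly and note afterwards that the $h(\sad)$ piece integrates exactly to $-A(\sad)^{-1}h(\sad)$. These are the same computation.) Two small slips: after the substitution $u=K(\alpha,\tau)$ the integral should be $-\int_0^{\infty}T(u)h(\tau(u))\,\dd u$, not $+\int_0^\infty$; and since $K(\alpha,\sad)=+\infty$ for every fixed $\alpha>\sad$, the $h(\sad)$ contribution is \emph{exactly} $-A(\sad)^{-1}h(\sad)$ with no ``exponentially small remainder.'' Neither affects the argument.

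Where you genuinely depart from the paper is uniqueness. The paper proves uniqueness by an energy argument: pairing the homogeneous equation with $\beta$, using that $B(\sad)=Q(\sad)A(\sad)Q(\sad)^{-1}$ is positive semi-definite and $\ml\le 0$ to conclude $\|\beta\|_{\Nm}^2$ is nonincreasing, then invoking $\beta(\sad)=0$. You instead push the fundamental-solution representation $\gamma(\alpha)=T\bigl(K(\alpha,\eta)\bigr)\gamma(\eta)$ to $\eta\to\sad^+$ and use $\|T(K(\alpha,\eta))\|\to 0$; this is also valid (and in fact only needs $\gamma$ bounded near $\sad$ rather than $\gamma(\sad)=0$), at the modest cost of re-using the semigroup machinery rather than the cheaper self-adjointness/sign argument. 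You correctly flag that the delicate point is $C^1$ regularity \emph{up to} $\alpha=\sad$; the paper disposes of this in one sentence, essentially by noting that differentiability of $h$ at $\sad$ plus dominated convergence in the scaled variable $(\tau-\sad)/(\alpha-\sad)$ gives existence of $\lim_{\alpha\to\sad^+}\beta'(\alpha)$ (with limit $(I-A(\sad)/\ml'(\sad))^{-1}h'(\sad)/\ml'(\sad)$), and your ``pass to the limit inside the estimates'' strategy is the right one to fill this in.
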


\begin{proof}
Using $\ml(\sad)=0$ and taking $\alpha=\sad$ in \eqref{pb:beta1} yields
\begin{equation}
\label{eq:4-3-1}
\beta(\sad) = -\OA(\sad)^{-1} h(\sad).
\end{equation}
We can now substitute 
\begin{align}
\label{eq:4-3-2}
\tilde{\beta}(\alpha) := \beta(\alpha) -  \beta(\sad)
\quad{\rm and}\quad
\tilde{h}(\alpha) := h(\alpha) -  h(\sad)
\qquad{\rm for}~\alpha\in[\sad,\sad+\delta]
\end{align}
and observe that $\beta$ solves \eqref{pb:beta1} if and only if $\tilde{\beta}$ solves
\begin{equation}
\label{eq:4-3-3}
    \left\{
	\begin{array}{ll}
	\ml(\alpha) \tilde{\beta}'(\alpha) = \OA(\sad) \tilde{\beta}(\alpha) + \tilde{h}(\alpha)\quad&{\rm for}~\alpha\in [\sad,\sad+\delta],
	\\[1ex]
	\tilde{\beta}(\sad) = 0. &
	\end{array}
	\right.
\end{equation}
Multiplying both sides of \eqref{eq:4-3-3} by $\ml(\alpha)^{-1}$, we obtain
\begin{equation}
\label{eq:4-3-4}
\tilde{\beta}'(\alpha) = \frac{\OA(\sad)}{\ml(\alpha)}\tilde{\beta}(\alpha) + \frac{\tilde{h}(\alpha)}{\ml(\alpha)}
\qquad{\rm for}~\alpha\in(\sad,\sad+\delta].
\end{equation}
Since $\ml<0$ in $(\sad,\sad+\delta]$ and $-A(\sad)$ is a sectorial operator, the solution $\tilde{\beta}$ of \eqref{eq:4-3-4} can be given by the method of variation of constants,
\begin{equation}
\label{eq:4-3-5}
\tilde{\beta}(\alpha) = e^{ \int_{\eta}^{\alpha}\frac{-1}{\ml(\tau)} \dd \tau \cdot (-\OA(\sad))}\tilde{\beta}(\eta) + \int_{\eta}^{\alpha} e^{ \int_{s}^{\alpha}\frac{-1}{\ml(\tau)} \dd \tau \cdot (-\OA(\sad))} \frac{\tilde{h}(s)}{\ml(s)}\dd s
\qquad{\rm for}~\alpha \in [\eta,\sad+\delta],
\end{equation} 
where $\eta\in(\sad,\sad+\delta)$ and $\tilde{\beta}(\eta)$ is the initial value.
Using Lemma \ref{lemma:lambda} (iii), Lemma \ref{le:semigroup} and Lemma \ref{le:A}, we have that, for $\eta <\alpha\leq\sad+\delta$,
\begin{equation}
\label{eq:4-3-6}
\left\| e^{ \int_{\eta}^{\alpha}\frac{-1}{\ml(\tau)} \dd \tau \cdot (-\OA(\sad))} \right\|_{\LL(\Nm)} \leq C e^{-\omega_S \int_{\eta}^{\alpha}\frac{-1}{\ml(\tau)} \dd \tau}
\leq C e^{-C \omega_S {\rm ln}\frac{\alpha-\sad}{\eta-\sad}}\leq C\left(\frac{\eta-\sad}{\alpha-\sad}\right)^{C\omega_S}<1 ,
\end{equation}
where $\omega_S$ is defined in \eqref{def:omega} and $C$ depends only on $E$ and $\mv$.
Then we can take the $\eta \rightarrow \sad^+$ limit of \eqref{eq:4-3-5}
\begin{equation}
\label{eq:4-3-7}
\tilde{\beta}(\alpha) = \int_{\sad}^{\alpha} e^{ \int_{s}^{\alpha}\frac{-1}{\ml(\tau)} \dd \tau \cdot (-\OA(\sad))} \frac{\tilde{h}(s)}{\ml(s)}\dd s.
\end{equation} 
By substituting \eqref{eq:4-3-7} into \eqref{eq:4-3-4}, using \eqref{eq:4-3-6} and Lemma \ref{lemma:lambda} (iii), we have
\begin{align*}
	\| \tilde{\beta}'(\alpha) \|_{\Nm} &= \left\| \frac{\OA(\sad)}{\ml(\alpha)} \int_{\sad}^{\alpha} e^{ \int_{s}^{\alpha}\frac{-1}{\ml(\tau)} \dd \tau \cdot (-\OA(\sad))} \frac{\tilde{h}(s)}{\ml(s)}\dd s + \frac{\tilde{h}(\alpha)}{\ml(\alpha)} \right\|_{\Nm}
	\\[1ex]
	&\hskip -1.5cm \leq C \left\| \frac{\tilde{h}(\alpha)}{\alpha-\sad} \right\|_{\Cz((\sad,\sad+\delta];\Nm)} (\alpha-\sad)^{-1} \int_{\sad}^{\alpha} 1 \dd s + C\left\| \frac{\tilde{h}(\alpha)}{\alpha-\sad} \right\|_{\Nm} \leq C \left\| \frac{\tilde{h}(\alpha)}{\alpha-\sad} \right\|_{\Cz((\sad,\sad+\delta];.\Nm)}.
\end{align*}
This together with \eqref{eq:4-3-1} and \eqref{eq:4-3-2} yields
\begin{align}\label{eq:4-3-8}
	\lVert  \beta' \rVert_{\Cz([\sad,\sad+\delta];\Nm)} &\leq C_{\rm S} \left( \left\lVert \frac{h(\alpha)-h(\sad)}{\alpha-\sad} \right\rVert_{\Cz((\sad,\sad+\delta];\Nm)} + \| h(\sad)\|_{\Nm} \right) ,
\end{align}
with a constant $C_{\rm S}$ depending only on $\ml$ and $A(\sad)$.
Moreover, since $h$ is differentiable at $\sad$, we see that $\tilde{\beta}(\alpha)$ in \eqref{eq:4-3-7} is also differentiable at $\sad$. 

Therefore, $\tilde\beta$ given by \eqref{eq:4-3-5} is in $\Co([\sad,\sad+\delta];\Nm)$, which readily implies that it indeed solves \eqref{eq:4-3-3}.
Together with \eqref{eq:4-3-1}, \eqref{eq:4-3-2} we therefore obtain a solution $\beta\in\Co([\sad,\sad+\delta];\Nm)$ to \eqref{pb:beta1}. 

To show the uniqueness of $\beta$ in \eqref{pb:beta1}, we only need to prove that there is only a trivial solution for the homogeneous equation
\begin{equation}
\label{eq:4-3-9}
\ml(\alpha) \beta'(\alpha) = \OA(\sad) \beta(\alpha) \quad{\rm for}~\alpha\in [\sad,\sad+\delta].
\end{equation}
By taking the inner product with $\beta(\alpha)$ on both sides, and using the facts that $Q(\alpha)$ is an isometry and that $B(\sad)$ is semi-positive definite, we have
\begin{align*}
\frac{\ml(\alpha)}{2}  \frac{\dd}{\dd\alpha} \|\beta(\alpha)\|_{\Nm}^2 
= \big(A(\sad)\beta(\alpha),\beta(\alpha)\big)_{\Nm} 
= \Big(B(\sad) Q(\alpha)^{-1}\beta(\alpha), Q(\alpha)^{-1}\beta(\alpha)\Big) \geq 0 
\end{align*}
for $\alpha\in[\sad,\sad+\delta]$.
This together with $\ml\leq 0$ in $[\sad,1]$ implies that $\|\beta\|_{\Nm}^2$ is decreasing in $[\sad,\sad+\delta]$. 
Combining this with $\beta(\sad)=0$ leads to $\beta\equiv 0$. 

Finally, the estimate \eqref{est:beta1} follows directly from \eqref{eq:4-3-8}, which completes the proof.
\end{proof}

\begin{lemma}
\label{le:beta2}
Let $\mv\in C^2\big([0,1];\HH\big)$ be the solution of \eqref{mep} and $\ml$ be given by \eqref{eq:ml}. 
Assume that {\bf (A)} and {\bf (B)} are satisfied. 
Let $v\in\Nm$, $\delta\in(0,\frac{1-\sad}{2})$, and $h(\alpha) \in C\big([1-\delta,1];\Nm\big)$ be differentiable at 1 and satisfy $h(1)=0$.
Then there is a unique $\beta \in C^1\big([1-\delta,1];\Nm\big)$ solving
\begin{equation}
\label{pb:beta2}
\left\{
\begin{array}{ll}
\ml(\alpha) \beta'(\alpha) = \OA(1) \beta(\alpha) + h(\alpha)
\quad &{\rm for}~\alpha\in [1-\delta,1],
\\[1ex]
\beta(1-\delta)=v,~\beta(1) = 0. &
\end{array}
\right.
\end{equation}
Moreover, there exists a positive constant $C_{\rm M}$ depending only on $\ml$ and $A(1)$, such that
\begin{align}
\label{est:beta2}
\lVert \beta' \rVert_{\Cz([1-\delta,1];\Nm)} &\leq C_{\rm M} \left( \left\lVert \frac{h(\alpha)}{\alpha-1} \right\rVert_{\Cz([1-\delta,1);\Nm)} + \delta^{-1} \|v\|_{\Nm}\right) .
\end{align}
\end{lemma}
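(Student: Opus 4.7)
The plan is to mirror the variation-of-constants (VoC) strategy already used for Lemma \ref{le:beta1}, with the key difference that the initial condition $\beta(1-\delta)=v$ is now posed at a regular point while the singular behaviour occurs at the right endpoint $\alpha=1$. Since $\ml<0$ on $[1-\delta,1)$ and $-\OA(1)$ is sectorial by Lemma \ref{le:A}, I would introduce the propagator
\[
U(\alpha,s) := \exp\bigl(-q(\alpha,s)\,\OA(1)\bigr),
\qquad
q(\alpha,s) := \int_s^\alpha \frac{\dd\tau}{-\ml(\tau)} > 0,
\]
and write the candidate solution of \eqref{pb:beta2} as
\[
\beta(\alpha) = U(\alpha,1-\delta)\,v \;+\; \int_{1-\delta}^\alpha U(\alpha,s)\,\frac{h(s)}{\ml(s)}\,\dd s.
\]
By Lemma \ref{le:semigroup} and the spectral bound $\sigma(\OA(1))\subset[\omega_B,\infty)$, this propagator satisfies $\|U(\alpha,s)\|_{\LL(\Nm)} \leq C e^{-\omega_B q(\alpha,s)}$. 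On each compact sub-interval $[1-\delta,1-\varepsilon]$ standard ODE theory already gives existence and uniqueness of a $C^1$ solution, so the real content of the lemma is the behaviour as $\alpha\to 1^-$.

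For the continuous extension to $\alpha=1$, Lemma \ref{lemma:lambda}(iii) yields $1/|\ml(\tau)| \geq c/(1-\tau)$ near $\tau=1$, hence $q(\alpha,1-\delta)\to\infty$ as $\alpha\to 1^-$ and the homogeneous contribution $U(\alpha,1-\delta)v$ vanishes. Since $h(1)=0$ and $h$ is differentiable at $1$, $\|h(s)/\ml(s)\|_{\Nm} \leq C\|h(\alpha)/(\alpha-1)\|_{C([1-\delta,1);\Nm)}$ uniformly on $[1-\delta,1]$, and a change of variable $u=q(\alpha,s)$ combined with the exponential decay of $e^{-\omega_B u}$ shows the integral term also tends to $0$. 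Hence $\beta(\alpha)\to 0$; substituting $\alpha=1$ into the ODE then forces $\beta(1)=0$ automatically since $\OA(1)$ is invertible, and differentiability of $\beta$ at $1$ can be read off by taking the limit of $\beta'(\alpha) = (\OA(1)\beta(\alpha)+h(\alpha))/\ml(\alpha)$ using $\ml'(1)=\sB$ and $\omega_B > \sB$ (to invert $\OA(1)-\sB I$). Uniqueness on $[1-\delta,1)$ is immediate from standard ODE uniqueness; alternatively, as in Lemma \ref{le:beta1}, an energy identity for the homogeneous problem, using the positive definiteness of $B(1)|_{\HH^\perp(1)}$ and $\ml\leq 0$, forces monotone decay of $\|\beta\|_{\Nm}^2$ from $0$.

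The principal technical work is the derivative estimate \eqref{est:beta2}. Differentiating the VoC formula gives
\[
\beta'(\alpha) = \frac{\OA(1)}{\ml(\alpha)}\,U(\alpha,1-\delta)\,v
\;+\; \frac{\OA(1)}{\ml(\alpha)}\int_{1-\delta}^\alpha U(\alpha,s)\,\frac{h(s)}{\ml(s)}\,\dd s
\;+\; \frac{h(\alpha)}{\ml(\alpha)}.
\]
The last term is bounded by $C\|h(\alpha)/(\alpha-1)\|_C$ via Lemma \ref{lemma:lambda}(iii). For the first term, combining $1/|\ml(\alpha)| \leq C/(1-\alpha)$ with the decay bound $\|U(\alpha,1-\delta)\|_{\LL(\Nm)} \leq C\bigl((1-\alpha)/\delta\bigr)^{\omega_B/\sB}$ (obtained by integrating the lower bound for $1/|\ml(\tau)|$ and using $\ml'(1)=\sB$) yields a product bounded by $C\delta^{-1}\|v\|_{\Nm}$, provided $\omega_B/\sB > 1$, which is exactly the content of assumption \textbf{(B)}(ii). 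For the middle term, the same change of variable $u=q(\alpha,s)$ together with the two-sided asymptotic $|\ml(s(u))| \leq C(1-\alpha)e^{c u}$ for some $c$ close to $\sB$ reduces the bound to an integral of the type $\int_0^\infty e^{(c-\omega_B)u}\dd u$, convergent once more because $\omega_B > \sB$, yielding $C\|h(\alpha)/(\alpha-1)\|_C$.

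The main obstacle is this delicate bookkeeping at the singular endpoint: one must simultaneously track (i) the exponential decay of the propagator in $q(\alpha,s)$, (ii) the algebraic blow-up of $1/\ml(\alpha)$, and (iii) the precise asymptotic $\ml(\alpha)\sim\sB(\alpha-1)$. The strict spectral gap $\omega_B > \sB$ enforced by assumption \textbf{(B)}(ii) is exactly what makes every estimate close; without it the two critical integrals fail to converge and both the $\delta^{-1}\|v\|_{\Nm}$ factor and the forcing bound break down.
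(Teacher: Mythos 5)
Your proposal is correct and follows essentially the same route as the paper: the variation-of-constants representation with the propagator generated by $-A(1)$, the semigroup decay bound $\|U(\alpha,s)\|\leq C\bigl((1-\alpha)/(1-s)\bigr)^{\sigma}$ with $\sigma=\omega_B/\sB>1$ from {\bf (B)}, the $\delta^{-1}\|v\|_{\Nm}$ bound for the homogeneous term, the convergent-integral bound for the inhomogeneous term (which you handle by the substitution $u=q(\alpha,s)$ where the paper integrates $\int_{1-\delta}^\alpha (1-\alpha)^{\sigma-1}(1-s)^{-\sigma}\dd s$ directly), differentiability at $\alpha=1$ from the invertibility of $\sB I-A(1)$, and uniqueness via the energy identity using positive semi-definiteness of $B(1)$. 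The only cosmetic differences are the change of variables in the middle integral and a small redundancy in deducing $\beta(1)=0$ (you already show $\beta(\alpha)\to 0$ directly, so the appeal to invertibility of $A(1)$ there is unneeded).
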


\begin{proof}
Multiplying both sides of \eqref{pb:beta2} by $\ml(\alpha)^{-1}$, we have
\begin{equation}
\label{eq:4-4-1}
\beta'(\alpha) = \frac{A(1)}{{\ml}(\alpha)}\beta(\alpha) + \frac{h(\alpha)}{\ml(\alpha)}
\qquad{\rm for}~\alpha\in[1-\delta,1).
\end{equation}
Since $\ml<0$ in $(\sad,1)$ and $-A(1)$ is a sectorial operator, the solution ${\beta}$ of \eqref{eq:4-4-1} can be given by the method of variation of constants,
\begin{equation}
\label{eq:4-4-2}
\beta(\alpha) = e^{\int_{1-\delta}^\alpha \frac{1}{-\ml(s)} \dd s\cdot(-A(1))} v + \int_{1-\delta}^\alpha e^{ \int_s^{\alpha} \frac{1}{-\ml(\tau)} \dd \tau \cdot(-A(1)) } \frac{h(s)}{\ml(s)} \dd s
\qquad{\rm for}~\alpha \in [1-\delta,1).
\end{equation} 
Substituting \eqref{eq:4-4-2} into \eqref{eq:4-4-1} gives
\begin{align}
\label{eq:4-4-3}
\beta'(\alpha) = \frac{A(1)}{{\ml}(\alpha)}  e^{\int_{1-\delta}^\alpha \frac{1}{-\ml(s)} \dd s\cdot(-A(1))}v + \frac{A(1)}{{\ml}(\alpha)}\int_{1-\delta}^\alpha e^{ \int_s^{\alpha} \frac{1}{-\ml(\tau)} \dd \tau \cdot(-A(1)) } \frac{h(s)}{\ml(s)} \dd s  + \frac{h(\alpha)}{\ml(\alpha)}.
\end{align}
Using Lemma \ref{lemma:lambda} (iii), Lemma \ref{le:semigroup} and Lemma \ref{le:A}, we have that, for $1-\delta\leq s<\alpha<1$,
\begin{equation}
\label{eq:4-4-4}
\left\| e^{ \int_{s}^{\alpha}\frac{-1}{\ml(\tau)} \dd \tau \cdot (-\OA(1))} \right\|_{\LL(\Nm)} \leq C e^{-\omega_B \int_{s}^{\alpha}\frac{-1}{\ml(\tau)} \dd \tau}
\leq C e^{-\frac{\omega_B}{\ml'(1)} {\rm ln}\frac{1-s}{1-\alpha}}\leq C\left(\frac{1-\alpha}{1-s}\right)^{\sigma} ,
\end{equation}
where $\omega_B$ is defined in \eqref{def:omega} and $\sigma := \frac{\omega_B}{\ml'(1)}$. 
From assumption {\bf (B)} and Lemma \ref{lemma:lambda} (i), we see that $\sigma>1$.
Then we have from Lemma \ref{lemma:lambda} (iii), \eqref{eq:4-4-3} and \eqref{eq:4-4-4} that, for $\alpha\in[1-\delta,1)$,
\begin{align}
\label{eq:4-4-5}
\| \beta'(\alpha)\|_{\Nm}
&\leq C \frac{(1-\alpha)^{\sigma-1}}{\delta^{\sigma}} \|v\|_{\Nm} + C \int_{1-\delta}^{\alpha} \frac{(1-\alpha)^{\sigma-1}}{(1-s)^{\sigma}} \dd s \left\|\frac{h(s)}{s-1}\right\|_{C([1-\delta,1);\Nm)} + C\left\|\frac{h(\alpha)}{\alpha-1}\right\|_{\Nm}
\nonumber\\[1ex]
&\leq C_{\rm M} \left( \left\lVert\frac{h(s)}{s-1} \right\rVert_{\Cz([1-\delta,1);\Nm)}  + \delta^{-1} \| v\|_{\Nm} \right) ,
\end{align}	
where the constant $C_{\rm M}$ depends only on $\ml$ and $A(1)$. 
We see from assumption {\bf (B)} and Lemma \ref{le:A} that $\ml'(1)=\sB \in \rho\big( A(1)\big)$ and thus $I-\frac{A(1)}{\ml'(1)}$ is invertible.
Taking the limit $\alpha\rightarrow 1^-$ of \eqref{eq:4-4-3} gives
$\beta'(1) =\big(I-\frac{A(1)}{\ml'(1)}\big)^{-1}\frac{h'(1)}{\ml'(1)}$,
which implies the existence of $\beta'(1)$. 
Then taking the limit $\alpha\rightarrow 1^-$ of \eqref{pb:beta2} leads to $\beta(1) = 0$, and hence implies that there exists a $\beta\in C^1\big([1-\delta,1];\Nm\big)$ solving \eqref{pb:beta2}.

A similar argument as that in the proof of Lemma \ref{le:beta1} leads to the uniqueness of $\beta$ in \eqref{pb:beta2}. 
And the estimate \eqref{est:beta2} follows directly from \eqref{eq:4-4-5}, which completes the proof.
\end{proof}

Now we can show the existence and uniqueness of $\beta_\perp$ in \eqref{pb:beta-2}.

\begin{lemma}
\label{le:ext_beta}
For any $\rv\in\Y$, let $g$ be given by \eqref{def:g}.
If {\bf (A)} and {\bf (B)} are satisfied, then \eqref{pb:beta-2} has a unique solution $\beta_\perp \in \Co([\sad,1];\Nm)$. 
\end{lemma}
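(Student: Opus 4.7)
My plan is to construct $\beta_\perp$ by concatenating three pieces: a left segment on $[\sad,\sad+\delta_1]$ near the degenerate point $\sad$, a middle segment on $[\sad+\delta_1,1-\delta_2]$ where $\ml$ is bounded away from zero, and a right segment on $[1-\delta_2,1]$ near $\alpha=1$. Lemmas~\ref{le:beta1} and~\ref{le:beta2} only treat model problems where the operator coefficient is frozen at $\OA(\sad)$ or $\OA(1)$, so the real technical work is to upgrade those constant-coefficient results to the variable-coefficient ODE \eqref{pb:beta-2} via contraction arguments.

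On $[\sad,\sad+\delta_1]$ I rewrite the equation as $\ml\beta_\perp' = \OA(\sad)\beta_\perp + h$ with $h(\alpha):=(\OA(\alpha)-\OA(\sad))\beta_\perp(\alpha)+g(\alpha)$, and define the map $T$ sending $\beta$ to the solution from Lemma~\ref{le:beta1} with this right-hand side. Since $\OA\in\Co$ by Lemma~\ref{le:A} and $f\in \Y$ is differentiable at $\sad$, the map $h$ inherits continuity on $[\sad,\sad+\delta_1]$ and differentiability at $\sad$, so Lemma~\ref{le:beta1} applies. Because $h(\sad)=g(\sad)$ is independent of $\beta$, every image $T\beta$ shares the value $-\OA(\sad)^{-1}g(\sad)$ at $\sad$; I therefore work in the closed affine subspace of $\Co([\sad,\sad+\delta_1];\Nm)$ with this prescribed initial value. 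Applying the estimate \eqref{est:beta1} to $T\beta_1-T\beta_2$ yields
\[
\|(T\beta_1-T\beta_2)'\|_{C^0} \le C_{\rm S}\bigl\|(\OA(\alpha)-\OA(\sad))(\beta_1-\beta_2)/(\alpha-\sad)\bigr\|_{C^0} \le C_{\rm S}\|\OA'\|_{C^0}\,\delta_1\,\|(\beta_1-\beta_2)'\|_{C^0},
\]
where the last step uses $(\beta_1-\beta_2)(\sad)=0$ to bound $\|\beta_1-\beta_2\|_{C^0}\le\delta_1\|(\beta_1-\beta_2)'\|_{C^0}$. Choosing $\delta_1$ small makes $T$ a contraction, and its fixed point gives the unique solution on this segment.

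On the regular interval $[\sad+\delta_1,1-\delta_2]$, dividing the ODE by $\ml\ne 0$ yields a standard linear Banach-space ODE with continuous coefficients; using the value at $\sad+\delta_1$ as initial data, classical theory extends the solution uniquely to $\alpha=1-\delta_2$. The final segment is handled by an analogous contraction built on Lemma~\ref{le:beta2}: set $h(\alpha)=(\OA(\alpha)-\OA(1))\beta(\alpha)+g(\alpha)$, which satisfies $h(1)=0$ because $g(1)=0$ (since $f(1)=0$ for $f\in\Y$), and take the matched value at $1-\delta_2$ as the datum $v$ in Lemma~\ref{le:beta2}. With $v$ held fixed, the difference $T\beta_1-T\beta_2$ vanishes at $1-\delta_2$, and \eqref{est:beta2} combined with the uniform bound on $(\OA(\alpha)-\OA(1))/(\alpha-1)$ produces a contraction for $\delta_2$ small. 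The terminal condition $\beta_\perp(1)=0$ comes for free from Lemma~\ref{le:beta2}. Concatenating the three pieces gives the claimed $\beta_\perp\in\Co([\sad,1];\Nm)$, and uniqueness propagates through the three steps: at $\sad$ and at $1$ the boundary values are forced by evaluating the ODE, while inside each piece the fixed-point or classical-ODE argument delivers uniqueness directly.

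The main obstacle is verifying the contraction estimates. The weighted norms appearing in \eqref{est:beta1}-\eqref{est:beta2} were designed precisely so that the singular factor $1/(\alpha-\sad)$ or $1/(\alpha-1)$ absorbs the $C^1$-vanishing of $\OA(\alpha)-\OA(\sad)$ or $\OA(\alpha)-\OA(1)$, leaving a Lipschitz bound that shrinks with the length of the interval; one must also check that the iterated right-hand sides $h$ stay within the admissible class of those lemmas, in particular that the endpoint differentiability and the $h(1)=0$ condition are preserved under iteration. Once these points are confirmed, existence and uniqueness on $[\sad,1]$ follow routinely.
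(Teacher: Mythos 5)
Your existence argument is essentially the paper's: the contraction map $T$ you build from Lemma~\ref{le:beta1} is exactly the Picard sequence $\{\beta_{\perp,k}\}$ the paper constructs near $\sad$ (your smallness condition on $\delta_1$ is the paper's \eqref{eq:exist3}), the middle segment is handled by the same generalized Picard--Lindel\"{o}f theorem, and the piece near $1$ is the analogous contraction built on Lemma~\ref{le:beta2}; these are the same argument phrased as a fixed point rather than as a Cauchy sequence. Where you genuinely depart is the uniqueness step. You propagate uniqueness piecewise from the contraction, whereas the paper takes the $\Nm$-inner product of the homogeneous equation with $\beta_\perp$ and uses positive semi-definiteness of $B(\sad)$, $B(1)$ together with $\ml\le 0$ to show $\|\beta_\perp\|_{\Nm}^2$ is non-increasing near the critical points, forcing $\beta_\perp\equiv 0$ there, and then applies Picard--Lindel\"{o}f on the interior. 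Both are valid; your route reuses the existence machinery and avoids a second energy estimate, while the paper's argument does not need the contraction constant to be small again. To make your uniqueness step airtight near $\alpha=1$ you should state explicitly that any $C^1$ solution on $[1-\delta_2,1]$ automatically satisfies $\beta_\perp(1)=0$ (take $\alpha\to 1^-$ in the ODE, using $g(1)=0$ and the invertibility of $A(1)$), so the difference of two solutions with matched datum at $1-\delta_2$ is a fixed point of the homogeneous contraction, whose only fixed point is zero.
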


\begin{proof}
The proof for the existence and uniqueness of $\beta_\perp$ in \eqref{pb:beta-2} is divided into three parts.
First, we will prove the existence of $\beta_\perp$ in a neighborhood of $\sad$. 
Second, we will extend $\beta_\perp$ to the rest part of $[\sad,1]$ and verify the boundary condition that $\beta_\perp(1) = 0$. 
Finally, we show the uniqueness of the solution.

\vskip 0.1cm

1. {\it Existence near the saddle}.
Taking $\alpha=\sad$ in \eqref{pb:beta-2}, we have $\beta_\perp(\sad) = -A(\sad)^{-1}g(\sad)$.
Then we can take a sufficiently small $\delta_2\in(0,\frac{1-\sad}{2})$ such that 
\begin{align}
\label{eq:exist3}
2 C_{\rm S}\, \delta_2 \sup_{\alpha\in(\sad,1]}\left\|\frac{A(\alpha)-A(\sad)}{\alpha-\sad}\right\|_{\LL(\Nm)} \leq 1,	
\end{align}
where $C_{\rm S}$ is defined in \eqref{est:beta1}.

To show the existence of $\beta_\perp$, we construct a sequence $\{\beta_{\perp,k}\}_{k=0}^{\infty}$ in $[\sad,\sad+\delta_2]$ and show that it has a limit $\beta_\perp := \lim_{k\rightarrow\infty}\beta_{\perp,k}$, which solves \eqref{pb:beta-2}. 
Let $\beta_{\perp,0}(\alpha) \equiv \beta_\perp(\sad)$ for $\alpha\in[\sad,\sad+\delta_2]$. 
For $k\in\N_+$, let $\beta_{\perp,k+1}$ be the solution of 
\begin{align}
\label{eq:4-1-3}
\left\{
\begin{array}{ll}
\ml \beta'_{\perp,k+1} = A(\sad) \beta_{\perp,k+1} + \big(A - A(\sad) \big) \beta_{\perp,k}+ g \quad{\rm in}~[\sad,\sad+\delta_2], 
\\[1ex]
\beta_{\perp,k+1}(\sad) = \beta_\perp(\sad).
\end{array}
\right.
\end{align}
Lemma \ref{le:beta1} implies that the sequence $\{\beta_{\perp,k}\}_{k=0}^{\infty}$ is well defined and $\beta_{\perp,k} \in \Co([\sad,\sad+\delta_2];\Nm)$ for any $k\in\N$. 
We show that the sequence $\{\beta_{\perp,k}\}_{k=0}^{\infty}$ converges uniformly.
Define $e_{k} := \beta_{\perp,k} - \beta_{\perp,k-1}$ for $k\in\N_+$.
We see that $e_{k+1},~k\in\N_+$ is the solution of
\begin{align*}
	\left\{
	\begin{array}{ll}
	\ml e'_{k+1} = A(\sad) e_{k+1} + \big(A-A(\sad)\big) e_{k}  \quad{\rm in}~[\sad,\sad+\delta_2], \\[1ex]
	e_{k+1}(\sad) = 0.
	\end{array}
	\right.
\end{align*}
Applying Lemma \ref{lemma:lambda} (i), (iii), Lemma \ref{le:beta1} and \eqref{eq:exist3} yields
\begin{align}
\label{eq:4-1-1}
\nonumber
\|e'_{k+1}\|_{\Cz([\sad,\sad+\delta_2];\Nm)} 
&\leq C_{\rm S} \left\|\frac{\big(A(\alpha)-A(\sad)\big)e_{k}(\alpha)}{\alpha-\sad}\right\|_{\Cz((\sad,\sad+\delta_2];\Nm)}  
\\[1ex]
&\hskip -2cm 
\leq C_{\rm S}\, \delta_2 \sup_{\alpha\in(\sad,\sad+\delta_2]}\left\|\frac{A(\alpha)-A(\sad)}{\alpha-\sad}\right\|_{\LL(\Nm)}  \|e'_{k}\|_{\Cz([\sad,\sad+\delta_2];\Nm)} 
\leq \frac12 \|e'_{k}\|_{\Cz([\sad,\sad+\delta_2];\Nm)}
\end{align}
and
\begin{align}
\label{eq:4-1-2} 
\nonumber
\|e'_{1}\|_{\Cz([\sad,\sad+\delta_2];\Nm)}
& \leq C_{\rm S} \left\|\frac{g(\alpha) - g(\sad) +\big(A(\alpha)-A(\sad)\big)\beta_\perp(\sad) }{\alpha-\sad}\right\|_{\Cz((\sad,\sad+\delta_2];\Nm)} 
\\[1ex]
& \leq C_{\rm S}\left\|\frac{g(\alpha) - g(\sad) }{\alpha-\sad}\right\|_{\Cz((\sad,\sad+\delta_2];\Nm)} + C \|\beta_\perp(\sad)\|_{\Nm}.
\end{align}
Since $f\in\Y$, we see that $\|e'_{1}\|_{\Cz([\sad,\sad+\delta_2];\Nm)}$ is bounded. 
Then \eqref{eq:4-1-1} and \eqref{eq:4-1-2} imply that the sequence $\{\beta_{\perp,k}\}_{k=0}^{\infty}$ is is a Cauchy sequence in $\Co([\sad,\sad+\delta];\Nm)$. 
Let $\beta_\perp := \lim_{k\rightarrow\infty}\beta_{\perp,k}$, since the convergence is in $C^1$ we can pass to the limit in \eqref{eq:4-1-3}, which implies that $\beta_\perp$ solves \eqref{pb:beta-2} on $[\sad,\sad+\delta_2]$.
	
\vskip 0.2cm
	
2. {\it Existence away from the saddle}. %
Multiplying both sides of \eqref{pb:beta2} by $\ml^{-1}$, then according to the generalized Picard-Lindel\"{o}f theorem \cite[Theorem 5.2.4]{atkinson2005theoretical}, the existence of the solution ${\beta}_\perp$ in $[\sad+\delta_2,1)$ follows immediately from the continuity of $\ml,~A$, and $g$. 
Thus, there exists a $\beta_\perp\in\Co([\sad,1);\Nm)$ solving
\begin{equation}
\label{pb:beta_M}
\ml(\alpha)\beta'_\perp(\alpha) = A(\alpha) \beta_\perp(\alpha) + g(\alpha)
\qquad{\rm for}~\alpha\in[\sad,1).
\end{equation}
Take sufficiently small $\delta_3>0$ such that
\begin{equation*}
2 C_{\rm M}\, \delta_3 \sup_{\alpha\in[\sad,1)}\left\|\frac{A(\alpha)-A(1)}{\alpha-1}\right\|_{\LL(\Nm)} \leq 1,
\end{equation*}
where $C_{\rm M}$ are defined in \eqref{est:beta2}. 
To show the existence of $\displaystyle \lim_{\alpha\rightarrow 1^-}\beta'_\perp(\alpha)$, we will also construct a new sequence, still denoted by $\{\beta_{\perp,k}\}_{k=0}^{\infty}$. 
Let $\beta_{\perp,0}(\alpha) \equiv \beta_\perp(1-\delta_3)$ for $\alpha\in[1-\delta_3,1]$. 
For $k\in\N_+$, let $\beta_{\perp,k+1}$ be the solution of 
\begin{align}\nonumber
	\left\{
	\begin{array}{ll}
	\ml \beta'_{\perp,k+1} = A(1) \beta_{\perp,k+1} + \big(A - A(1) \big) \beta_{\perp,k} + g \quad{\rm in}~[1-\delta_3,1], \\[1ex]
	\beta_{\perp,k+1}(1-\delta_3) = \beta_\perp(1-\delta_3).
	\end{array}
	\right.
\end{align}
Lemma \ref{le:beta2} implies that the sequence $\{\beta_{\perp,k}\}_{k=0}^{\infty}$ is well defined and $\beta_{\perp,k} \in \Co([1-\delta_3,1];\Nm)$ for any $k\in\N$. 

We then show the uniform convergence of the sequence.
Let $e_{k} := \beta_{\perp,k} - \beta_{\perp,k-1}$ for $k\in\N_+$.
We see that $e_{k+1},~k\in\N_+$ is the solution of
\begin{align*}
	\left\{
	\begin{array}{ll}
	\ml e'_{k+1} = A(1) e_{k+1} + \big(A-A(1)\big) e_{k}  \quad{\rm in}~[1-\delta_3,1], \\[1ex]
	e_{k+1}(1-\delta_3) = 0.
	\end{array}
	\right.
\end{align*}
Applying Lemma \ref{le:beta2} and \eqref{eq:exist3} yields
\begin{align}
\label{eq:4-1-4}
	\|e'_{k+1}\|_{\Cz([1-\delta_3,1];\Nm)} &\leq C_{\rm M} \left\|\frac{\big(A(\alpha)-A(1)\big)e_{k}(\alpha)}{\alpha-1}\right\|_{\Cz([1-\delta_3,1);\Nm)} 
	\nonumber\\[1ex]
	&\hskip -2cm \leq C_{\rm M}\, \delta_3 \sup_{\alpha\in[1-\delta_3,1)}\left\|\frac{A(\alpha)-A(1)}{\alpha-1}\right\|_{\LL(\Nm)}  \|e'_{k}\|_{\Cz([1-\delta_3,1];\Nm)} \leq \frac12 \|e'_{k}\|_{\Cz([1-\delta_3,1];\Nm)}
\end{align}
and
\begin{align}
\label{eq:4-1-5}
\nonumber
\|e'_{1}\|_{\Cz([1-\delta_3,1];\Nm)}
& \leq C_{\rm M} \left\|\frac{g(\alpha) - g(1) +\big(A(\alpha)-A(1)\big)\beta_\perp(1-\delta_3) }{\alpha-1}\right\|_{\Cz([1-\delta_3,1);\Nm)}  
\\[1ex]
& \leq C_{\rm M}\left\|\frac{g(\alpha) - g(1) }{\alpha-1}\right\|_{\Cz([1-\delta_3,1);\Nm)} + C \|\beta_\perp(1-\delta_3)\|_{\Nm}, 
\end{align}
Since $f\in\Y$, we have that $\|e'_{1}\|_{\Cz([1-\delta_3,1];\Nm)}$ is bounded. 
Then \eqref{eq:4-1-4} and \eqref{eq:4-1-5} imply that the sequence $\{\beta_{\perp,k}\}_{k=0}^{\infty}$ is uniformly convergent. 
Let $k\rightarrow \infty$ in \eqref{eq:4-1-3}, we obtain that $\displaystyle \beta_{\perp} = \lim_{k\rightarrow\infty}\beta_{\perp,k}$ in $[1-\delta_3,1]$.
Then taking the $\alpha\rightarrow 1^-$ limit of \eqref{pb:beta_M} gives
\begin{equation*}
\beta_\perp(1) = -A(1)^{-1} g(1) = 0 ,
\end{equation*}
which completes the proof for the existence of the solution $\beta_\perp \in \Co([\sad,1];\Nm)$ of \eqref{pb:beta-2}.
    
\vskip 0.1cm
    
3. {\it Uniqueness}. 
It suffices to show that there is only a zero solution for the problem
\begin{equation}
\label{pb:uni}
	\left\{
	\begin{array}{ll}
	\ml(\alpha) \beta'_\perp(\alpha) = A(\alpha) {\beta}_\perp(\alpha)\quad{\rm for}~\alpha\in[\sad,1], \\[1ex]
	{\beta}_\perp(1) = {0}.
	\end{array}\right.
\end{equation}
Taking $\alpha=\sad$ in \eqref{pb:uni} yields $\beta_\perp(\sad) = 0$.
Then we take the inner product with $\beta_{\perp}(\alpha)$ on both sides of \eqref{pb:uni}.
Since $Q(\alpha)$ is an isometry, we have
\begin{multline}
\label{eq:4-1-6}
	\frac{\ml(\alpha)}{2}  \frac{\dd}{\dd\alpha} \|\beta_{\perp}(\alpha)\|_{\Nm}^2 
	= \big(  A(\alpha) {\beta}_\perp(\alpha), {\beta}_\perp(\alpha)\big)_{\Nm} = \Big(B(\alpha) Q(\alpha)^{-1}\beta_{\perp}(\alpha), Q(\alpha)^{-1}\beta_{\perp}(\alpha)\Big)
	\\[1ex]
	- \ml(\alpha) \left( \Big(Q(\alpha) D(\alpha)Q(\alpha)^{-1}\beta_\perp(\alpha),\beta_\perp(\alpha)\Big)_{\Nm}+ \Big(Q'(\alpha)Q(\alpha)^{-1}\beta_\perp(\alpha),\beta_\perp(\alpha)\Big)_{\Nm} \right) 
\end{multline}
for $\alpha\in[\sad,1]$. 
Since $B(\sad)$ and $B(1)$ are positive semi-definite and $\ml(\sad)=\ml(1)=0$, there exists a constant $\delta_4>0$ such that the right hand side of \eqref{eq:4-1-6} is non-negative for $\alpha\in[\sad,\sad+\delta_4]\cup[1-\delta_4,1]$. 
Using the fact that $\ml(\alpha) \leq 0$ for $\alpha\in[\sad,1]$, we obtain that $\|{\beta}_\perp(\alpha)\|_{\Nm}^2$ is decreasing on $[\sad,\sad+\delta_4]\cup[1-\delta_4,1]$. 
From $\beta_\perp(\sad)=0$, we have $\beta_\perp\equiv 0$ in $[\sad,\sad+\delta_4]$. 
Note that $\beta \equiv0$ is a solution of \eqref{pb:uni}, then using the generalized Picard-Lindel\"{o}f theorem \cite[Theorem 5.2.4]{atkinson2005theoretical} can yield that $\beta_\perp \equiv 0$ is the only solution of \eqref{pb:uni}. 
\end{proof}

\begin{corollary}
\label{coro:b}
Under the conditions of Lemma \ref{le:ext_beta}, \eqref{pb:beta} has a unique solution $\beta_\perp\in C^1\big([0,1];\Nm\big)$.
\end{corollary}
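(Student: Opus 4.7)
\textbf{Proof proposal for Corollary \ref{coro:b}.}
The plan is to obtain $\beta_\perp$ on $[0,1]$ by constructing solutions separately on the two halves $[0,\sad]$ and $[\sad,1]$ and then gluing them at $\sad$. Lemma \ref{le:ext_beta} directly supplies a unique solution $\beta_\perp^+\in\Co([\sad,1];\Nm)$ with $\beta_\perp^+(1)=0$. For the left half, I would repeat the three-step argument of Lemma \ref{le:ext_beta} verbatim with the roles reversed: $\alpha=0$ plays the role of $\alpha=1$, $\sA$ replaces $\sB$, and $\omega_A:=\inf\{\lambda\in\sigma(\nabla^2 E(y^A_M)):\lambda\ne\sA\}$ replaces $\omega_B$. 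Assumption \textbf{(B)} guarantees $\omega_A>0$, so the analogues of Lemma \ref{le:A} and Lemma \ref{le:beta2} on $[0,\sad]$ go through unchanged, yielding a unique $\beta_\perp^-\in\Co([0,\sad];\Nm)$ solving the ODE with $\beta_\perp^-(0)=0$.

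Next I would verify that $\beta_\perp^-$ and $\beta_\perp^+$ match at $\sad$ in both value and first derivative. Evaluating the ODE at $\alpha=\sad$ and using $\ml(\sad)=0$ gives $A(\sad)\beta_\perp(\sad)=-g(\sad)$; since $A(\sad)$ is invertible (being sectorial with spectrum bounded away from $0$ by $\omega_S$), this forces
\begin{equation*}
\beta_\perp^-(\sad)=\beta_\perp^+(\sad)=-A(\sad)^{-1}g(\sad).
\end{equation*}
For the derivatives, differentiating the ODE and letting $\alpha\to\sad$ (both derivatives exist by the constructions in Lemma \ref{le:beta1} and its mirror image) yields
\begin{equation*}
\bigl(\ml'(\sad)I-A(\sad)\bigr)\beta_\perp'(\sad)=A'(\sad)\beta_\perp(\sad)+g'(\sad).
\end{equation*}
By Lemma \ref{lemma:lambda}(ii), $\ml'(\sad)<0$, whereas $\sigma(A(\sad))\subset[\omega_S,+\infty)$ with $\omega_S>0$, so $\ml'(\sad)I-A(\sad)$ is invertible. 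Hence $\beta_\perp'(\sad)$ is uniquely determined, and the one-sided derivatives of $\beta_\perp^-$ and $\beta_\perp^+$ at $\sad$ coincide. Defining $\beta_\perp$ on $[0,1]$ by gluing then gives an element of $\Co([0,1];\Nm)$ satisfying \eqref{pb:beta}.

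For uniqueness, any solution on $[0,1]$ restricts to a solution of the corresponding one-sided problem on each half with the same boundary data $\beta_\perp(\sad)=-A(\sad)^{-1}g(\sad)$ forced by the equation, and hence coincides with $\beta_\perp^\pm$ by the uniqueness parts of Lemma \ref{le:ext_beta} and its left-half analogue.

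The main obstacle I anticipate is the $C^1$ matching at the interior saddle point $\sad$: unlike the endpoints, here one must ensure that the two independent constructions produce the \emph{same} derivative. This rests on the non-resonance condition $\ml'(\sad)\notin\sigma(A(\sad))$, which is fortunately automatic from the sign information in Lemma \ref{lemma:lambda}(ii) together with the positivity of $A(\sad)$ inherited from \textbf{(B)}; mirroring the left-half analysis is otherwise routine.
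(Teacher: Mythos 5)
Your proposal follows essentially the same route as the paper: split $[0,1]$ at $\sad$, apply Lemma \ref{le:ext_beta} on $[\sad,1]$ and its mirror on $[0,\sad]$, match the value at $\sad$ via $\beta_\perp(\sad)=-A(\sad)^{-1}g(\sad)$, and match the one-sided derivatives by noting they both satisfy an algebraic equation with the invertible operator $\ml'(\sad)I-A(\sad)$ (the paper reaches this by multiplying the ODE by $\ml^{-1}$ and taking $\alpha\to\sad$, which is the difference-quotient form of your ``differentiating the ODE''; phrased that way you avoid needing $\beta_\perp''$, which is not known to exist). Your explicit non-resonance observation and the inclusion of the $A'(\sad)\beta_\perp(\sad)$ term are slightly more careful than the paper's stated formula, but the argument and conclusion coincide.
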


\begin{proof}
    The problem \eqref{pb:beta} can be divided into \eqref{pb:beta2} and the following problem
\begin{equation}\label{pb:beta3}
\left\{
\begin{array}{ll}
\ml(\alpha) \beta_\perp ' (\alpha) = A(\alpha)\beta_\perp (\alpha)+ g(\alpha)\quad &\rm{for}~~ \alpha\in [0,\sad], \\[1ex]
\beta_\perp (0) = 0.
\end{array}
\right.
\end{equation}
The same arguments as in Lemma \ref{le:ext_beta} can be used to obtain the existence and uniqueness of a solution ${\beta}_\perp \in \Co([0,\sad];\Nm)$ of \eqref{pb:beta3}. 
Then \eqref{pb:beta} has a unique solution $\beta_{\perp}$, which has continuous derivative on $[0,\sad]$ and $[\sad,1]$.
Taking $\alpha=\sad$ in \eqref{pb:beta}, we have $\beta_\perp(\sad) = -A(\sad)^{-1}g(\sad)$, which implies the continuity of $\beta_\perp$ at $\sad$.
By multiplying $\ml^{-1}(\alpha)$ on both sides of \eqref{pb:beta}, taking the $\alpha\rightarrow\sad$ limit and using the facts that $A(\sad)\beta_\perp(\sad) + g(\sad) = 0$ and $f$ is differentiable at $\sad$, we have
\begin{equation}
\label{eq:con_sad}
    \lim_{\alpha\rightarrow \sad^+}\beta'_\perp(\alpha)  = \frac{1}{\ml'(\sad)}\left( I - \frac{A(\sad)}{\ml'(\sad)}\right)^{-1} \lim_{\alpha\rightarrow\sad}\frac{g(\alpha)-g(\sad)}{\alpha-\sad} = \lim_{\alpha\rightarrow \sad^-}\beta'_\perp(\alpha).
\end{equation}
This yields $\beta_\perp\in\Co([0,1];\Nm)$.
\end{proof}

Now we are ready to complete the proof of Theorem \ref{theorem:Stability}.

\begin{proof}
[{\it Proof of Theorem \ref{theorem:Stability} (ii)}]
We first show that $\delta\mathscr{F}(\mv)$ is a bijection.
Let $\rv\in\Y$ and $g$ be given by \eqref{def:g}. From Corollary \ref{coro:b}, we obtain a unique solution $\beta_\perp\in C^1\big([0,1];\Nm\big)$ of \eqref{pb:beta} and let $\tv_\perp = Q^{-1}\beta_\perp$.

Next, we show the existence of a unique solution $\beta_0\in\Co([0,1];\R)$ of \eqref{eq:beta0}, i.e., the existence of $\tv$ along the tangent direction. 
By taking $\alpha=\sad$ in \eqref{eq:beta0}, we see that $\beta_0(\sad)$ can be uniquely determined by $f$ and $\beta_\perp$, more precisely, $\beta_0(\sad) = \ml'(\sad)^{-1}\Big( \big( f(\sad),\tad(\sad)\big) - G(\tv_\perp)(\sad)\Big)$. 
Then we can rewrite \eqref{eq:beta0} as
\begin{align}
\label{eq:beta0-2}
\ml(\alpha) \beta'_0(\alpha) = (\sA+\sB) \beta_0(\alpha) + g_0(\alpha) 
\qquad{\rm for}~\alpha \in[0,1],
\end{align}
where 
\begin{align*}
g_0(\alpha) &:= \ml'(\sad)\frac{\alpha(\alpha-1)}{\sad(\sad-1)} \big( \tad(\alpha),\tad(\sad)\big)\beta_0(\sad)  - (\sA+\sB)\frac{\alpha(\alpha-1)}{\sad(\sad-1)}\beta_0(\sad)
\\[1ex]
&\quad + \frac{\alpha(\alpha-1)}{\sad(\sad-1)} \big(\nabla^2E(\mv(\sad))\tv_\perp(\sad),\tad(\alpha)\big) - \big(\rv(\alpha),\tad(\alpha)\big) + G(\tv_\perp)(\alpha),
\quad{\rm for}~\alpha \in[0,1].
\end{align*}
Using Lemma \ref{le:X-Y}, we have 
\begin{align*}
    \|g_0\|_{\Y} & \leq C \|\beta_0(\sad)\|_{\Nm} + \|G(\beta_\perp)\|_{\Y} + C \|f\|_{\Y} \leq C \|f\|_{\Y} + \|G(\beta_\perp)\|_{\Co([0,1];\R)} 
    \\[1ex]
    & \leq C \|f\|_{\Y} + \|\beta_\perp\|_{\Cz([0,1];\Nm)}. 
\end{align*}
Since $\sA+\sB > \ml'(1) > 0$, an argument analogous to that in the proof of Lemma \ref{le:ext_beta} implies that there exists a unique $\beta_0\in\Co([0,1];\R)$ solving \eqref{eq:beta0}. 
With the representation \eqref{eq:tv}, we show that \eqref{eq:var} has a unique solution $\tv\in\X$ for any $f\in\Y$, which implies that $\delta\mathscr{F}(\mv)$ is a bijection.

For any $\tv\in\X$, we have from Lemma \ref{le:X-Y} and \eqref{eq:LMEPO} that
\begin{equation}\nonumber
    \|\delta\mathscr{F}(\mv)\tv\|_{\Y} \leq C \big( \|\tv\|_{\X} + \|\delta\Gamma(\mv)\tv\|_{\Co([0,1];\R)}\big) \leq C \|\tv\|_{\X}.
\end{equation}
Thus, $\delta\mathscr{F}(\mv)$ is a bounded linear bijection and according to the open mapping theorem (or, bounded inverse theorem) it is an isomorphism. 
This completes the proof of Theorem \ref{theorem:Stability}.
\end{proof}

\subsection{Remarks on assumption {\bf (B)}}
\label{sec:finite}

In Theorem \ref{theorem:Stability} (ii), the condition that $\sA$ and $\sB$ are simple and the lowest eigenvalues is necessary for the stability result.
We provide two examples in the following, showing that if $\sA$ is a degenerated eigenvalue or not the lowest eigenvalue, then the stability result in Theorem \ref{theorem:Stability} (ii) does not hold. 

For simplicity of the presentations, we consider a finite dimensional configuration space $\HH =\R^{N}$. 
We first give the isometry $Q(\alpha)$. 
Consider the eigenvalue problem 
\begin{equation}
\label{eq:ev}
\Big(P^\perp_{\mv'(\alpha)}\nabla^2 E\big(\mv(\alpha)\big) P^\perp_{\mv'(\alpha)}\Big) \evt_i(\alpha) = \ev_i(\alpha) \evt_i(\alpha)
\qquad {\rm for}~ i = 0,1,\cdots,N-1 
\end{equation}
with $\{z_i(\alpha)\}_{i=0}^{N-1}$ the eigenvalues and $\{\evt_i(\alpha)\}_{i=0}^{N-1}$ the corresponding eigenfunctions.
We order the functions $z_i(\alpha)$ according to their values at $\alpha=\sad$ and their regularity with respect to $\alpha$, such that $\ev_0(\sad) \leq \ev_1(\sad) \leq \cdots \leq \ev_{N -1}(\sad)$ and $z_i\in C^1([0,1];\R)$ for $i=0,1,\cdots,N-1$.
This can be done since $E\in C^3$ and  $P^\perp_{\mv'(\alpha)}\nabla^2 E\big(\mv(\alpha)\big) P^\perp_{\mv'(\alpha)}\in C^1\left([0,1];\R^{N\times N}\right)$, which together with the perturbation theory for eigenvalue problems \cite[II-Theorem 6.8]{kato2013perturbation} can lead to $z_i\in C^1([0,1];\R)$. 
Similarly, we have $\evt_i \in C^1\left([0,1];\R^{N}\right)$ for $i=0,1,\cdots, N-1$.
Define $Q(\alpha) : \HH^\perp(\alpha)\rightarrow \R^{N-1}$ by \begin{equation*}
	Q(\alpha) y:= \Big( \big(y,\evt_1(\alpha)\big),~\big(y,\evt_2(\alpha)\big),\cdots,\big(y,\evt_{N-1}(\alpha)\big) \Big)^T.
\end{equation*}
We see immediately that $Q(\alpha)$ is an isometry for $\alpha\in[0,1]$ and is differentiable at $0,\sad,1$.
We will now show that the assumption {\bf (B)} is necessary for the stability result.

Assume that there exists $\ev_j(0)=\sA=\ml'(0)$ with $1\leq j\leq N-1$ (which is contradicted to the assumption that $\sA$ is simple).
Then we have from $E\in C^3$ that there exists $\eta_0\in(0,\frac{\sad}{2})$ such that $\lVert \ev_j - \ml' \rVert_{\Cz([0,\eta_0];\R)} \leq 1$.
Let
\begin{equation}
\label{eq:rem1-2}
\beta_{n}(\alpha) = \left\{
\begin{array}{ll}
\left( \frac{\eta_0-\alpha}{\eta_0} \right)^{n} \ml(\alpha) &{\rm in} ~~[0,\eta_0]  \\[1ex]
0 &{\rm in} ~~(\eta_0,1]
\end{array} ,
\right.
\quad
\tv_n(\alpha) = \beta_n(\alpha)\evt_j(\alpha) 
\quad{\rm and}\quad
f_n = \delta\mathscr{F}(\mv)\tv_n .
\end{equation}
We have that $f_n(\alpha)=0 $ for $ \alpha\in [\eta_0,1]$. 
For $\alpha\in[0,\eta_0]$ we obtain, by using \eqref{pb:beta}, \eqref{eq:beta0} and \eqref{eq:ev}, that
\begin{align}
\label{eq:rem1-3}
\evt_j(\alpha)^T f_n(\alpha) &= \ev_j(\alpha)\beta_n(\alpha) - \ml(\alpha)\beta'_n(\alpha) 
\nonumber \\[1ex]
&= \ml(\alpha)\big(\ev_j(\alpha) -\ml'(\alpha)\big)
\Big( \frac{\eta_0-\alpha}{\eta_0} \Big)^n - \ml(\alpha)^2\frac{n( \eta_0-\alpha)^{n-1}}{\eta_0^{n}},
\nonumber \\[1ex]
\evt_i(\alpha)^T f_n(\alpha) &= -\ml(\alpha) \beta_n(\alpha) \evt_i(\alpha)^T\evt'_j(\alpha) \quad{\rm for}~ i = 1,\cdots,N-1,~i\neq j , 
\qquad{\rm and}
\nonumber \\[1ex]
\evt_0(\alpha)^T f_n(\alpha) & = (\sA+\sB) \left( \hat{G}_n(\alpha) - \frac{\alpha(\alpha-1)}{\sad(\sad-1)} \hat{G}_n(\sad)\right) - \ml(\alpha) \hat{G}'_n(\alpha),
\end{align}
where $\hat{G}_n(\alpha) := \int_0^{\alpha}\beta_n(s) \evt_0(s)^T\evt'_j(s) \dd s - \alpha \int_0^{1}\beta_n(s) \evt_0(s)^T\evt'_j(s) \dd s $. 
Using \eqref{eq:rem1-2} and the facts that $0 < \frac{\eta_0-\alpha}{\eta_0} < 1$ for $\alpha \in (0,\eta_0)$, $\ml(0)=0$ and $\| \hat{G}_n \|_{\Co([0,1];R)} \leq C \| \beta_n \|_{\Cz([0,1];\R)}$, we have
\begin{equation*}
    \lim_{n\rightarrow +\infty} \| \hat{G}_n \|_{\Co([0,1];\R)} = \lim_{n\rightarrow +\infty} \| \beta_n \|_{\Cz([0,1];\R)} = 0.
\end{equation*}
Together with \eqref{eq:rem1-3} and Lemma \ref{lemma:lambda} (iii), this implies $\lVert f_n \rVert_{\Y} \rightarrow 0$ as $n\rightarrow +\infty$. 
Meanwhile, we have 
\begin{equation*}
\lVert \tv_n \rVert_{\X} \geq \lVert \tv'_n(0) \rVert_{\ell^\infty} = \lVert \ml'(0)\evt_j(0) \rVert_{\ell^\infty}.
\end{equation*}
Therefore $\lVert f_n \rVert_{\Y}$ can not be uniformly bounded by $\lVert \tv_n \rVert_{\X}$ as $n$ increases, and the stability result does not hold.

Assume that there exists $1\leq j\leq N-1$ such that $\ev_j(0)<\sA=\ml'(0)$ (which is contradicted to the assumption that $\sA$ is the lowest eigenvalue).
Let 
\begin{eqnarray}
\label{rem1-3}
\sigma_j:=\frac{\ev_j(0)}{\ml'(0)} < 1.
\end{eqnarray}
Let $\eta_1:=\frac{\sad}{2}$ and
\begin{equation*}
\beta(\alpha) = \left\{
\begin{array}{ll}
\alpha^{\sigma_j}(\alpha - \eta_1)^2 &{\rm in} ~~[0, \eta_1]  \\[1ex]
0 &{\rm in} ~~(\eta_1,1]
\end{array} ,
\right.
\quad
\tv(\alpha) = \beta(\alpha)\evt_j(\alpha)
\quad{\rm and}\quad
f = \delta\mathscr{F}(\mv)\tv .
\end{equation*}
From \eqref{pb:beta}, \eqref{eq:beta0} and \eqref{eq:ev}, we obtain from a direct computation that
\begin{align}
\evt_j(\alpha)^T f(\alpha) &= \ev_j(\alpha)\beta(\alpha) - \ml(\alpha)\beta'(\alpha) = \ml(\alpha)\alpha^{\sigma_j}(\alpha-\eta_1)\left( \left( \frac{\ev_j(\alpha)}{\ml(\alpha)} - \frac{\sigma_j}{\alpha} \right)(\alpha-\eta_1)  - 2\right) ,
\nonumber\\[1ex]
\evt_i(\alpha)^T f(\alpha) &= -\ml(\alpha) \beta(\alpha) \evt_i(\alpha)^T\evt'_j(\alpha)\quad{\rm for}~ i = 1,\cdots,N-1,~i\neq j \qquad{\rm and}
\nonumber \\[1ex]
\evt_0(\alpha)^T f(\alpha) & = (\sA+\sB) \left( \hat{G}(\alpha) - \frac{\alpha(\alpha-1)}{\sad(\sad-1)} \hat{G}(\sad)\right) - \ml(\alpha) \hat{G}'(\alpha),\nonumber
\end{align}
where $\hat{G}(\alpha) := \int_0^{\alpha}\beta(s) \evt_0(s)^T\evt'_j(s) \dd s - \alpha \int_0^{1}\beta(s) \evt_0(s)^T\evt'_j(s) \dd s$. 
Then applying Lemma \ref{lemma:lambda} (iii) yields  $\|f\|_{\Y}<\infty$. However, using $\sigma_j<1$ leads to
\begin{equation*}
    \lim_{\alpha\rightarrow 0^+} \tv'(\alpha) = \lim_{\alpha\rightarrow 0^+}\beta'(\alpha) \evt_j(0) = \lim_{\alpha\rightarrow 0^+} \frac{\sigma_j(1-\eta_1)^2}{\alpha^{1-\sigma_j}}\evt_j(0) = \infty.
\end{equation*}
This implies that $f\in\Y$ has no preimage in $\X$. 
Therefore $\delta\mathscr{F}(\mv) :\X\rightarrow\Y$ is not a surjection, and thus not an isomorphism.


\section{Proofs: Applications of the stability}
\label{sec:app}
\setcounter{equation}{0}

\begin{proof}
[Proof of Corollary \ref{coro:app1}]
For sufficiently small $\delta$, there exists an interpolation $\Pi_{\delta}: \U\rightarrow \U_\delta$ such that $\Pi_\delta \mv\in C^1\big([0,1];\mathcal{U}_{\epsilon}\cap\HH_\delta\big)$, $\Pi_\delta\mv(0) = y^A_{M,\delta},~\Pi_\delta\mv(1)=y^B_{M,\delta}$ and
\begin{equation}
\label{eq:app1-3}
\|\Pi_\delta\mv - \mv\|_{C^1([0,1];\HH)} \leq C\inf_{\varphi_\delta\in\U_\delta}\|\varphi_\delta-\mv\|_{C^1([0,1];\HH)}
\end{equation}
where $C>1$.
To see the existence of such interpolation, we provide a simple construction by
\begin{eqnarray*}
\big(\Pi_\delta\mv\big)(\alpha) = I_{\delta}\big(\mv(\alpha)\big) + \alpha\big(y^B_{M,\delta}-I_{\delta}y^B_M\big) + (1-\alpha)\big(y^A_{M,\delta}-I_{\delta}y^A_M\big) 
\qquad\forall~\alpha\in[0,1],
\end{eqnarray*}
where $I_{\delta}:\HH\rightarrow\HH_{\delta}$ is given such that $|I_{\delta}x-x| = \inf_{x_{\delta}\in\HH_{\delta}}|x_{\delta}-x|$ for any $x\in\HH$.
Let $\dF:\U_\delta\rightarrow \Y$ be given by \eqref{def:F} with $E$ replaced by $\dE$. Using \eqref{eq:app1-3}, we have the consistency error
\begin{align}
\label{app1:cons}
\|\dF(\Pi_\delta \mv)\|_{\Y} &= \| \dF(\Pi_\delta\mv) - \mathscr F(\mv)\|_{\Y}
\nonumber\\[1ex]
&\leq C \|\nabla \dE(\Pi_\delta\mv) - \nabla E(\mv) \|_{C^1([0,1];\HH)} + C \|\Pi_\delta\mv - \mv\|_{C^1([0,1];\HH)} 
\nonumber \\[1ex]
& \leq C \|E_{\delta}-E\|_{C^1(\mathcal{U}_{\epsilon} \cap \HH_\delta)} + C\inf_{\varphi_\delta\in\U_\delta}\|\varphi_\delta-\mv\|_{C^1([0,1];\HH)},
\end{align}
where $C$ depends only on $E$ and $\mv$.
	
Define a subspace $\X_\delta$ of $\X$ by
\begin{equation*}
    \X_\delta := \{\psi\in C^1\big([0,1];\HH_\delta\big):~\psi(0)=\psi(1)=0\}.
\end{equation*}
To derive the stability of $\delta{\mathscr F}_{\delta}(\Pi_\delta\mv)$, using \eqref{eq:app1-3} and Theorem \ref{theorem:Stability}(i), we obtain by a direct calculation that
\begin{align*}
\|\delta{\mathscr F}(\mv)\tv - \delta\dF(\Pi_\delta \mv)\tv\|_{\Y} 
&\leq \|\delta{\mathscr F}(\mv)\tv - \delta{\mathscr F}(\Pi_\delta \mv)\tv\|_{\Y}  + \|\delta{\mathscr F}(\Pi_\delta \mv)\tv - \delta\dF(\Pi_\delta \mv)\tv\|_{\Y}  
\\[1ex]
& \hskip -2cm \leq C \left( \|E_{\delta}-E\|_{C^2(\mathcal{U}_{\epsilon} \cap \HH_\delta)} + \inf_{\varphi_\delta\in\U_\delta}\|\varphi_\delta-\mv\|_{C^1([0,1];\HH)}\right) \|\tv\|_{\X}
\qquad\forall~\tv\in\X_\delta ,
\end{align*}
where the constant $C$ depends only on $E$ and $\mv$.
Taking sufficiently small $\delta$ and using Theorem \ref{theorem:Stability}(ii), we have that
\begin{align}
\label{app1:stab}
\|\delta\dF(\Pi_\delta \mv)\tv\|_{\Y}& \geq \|\delta{\mathscr F}(\mv)\tv\|_{\Y} - \|\delta{\mathscr F}(\mv)\tv - \delta\dF(\Pi_\delta \mv)\tv\|_{\Y} \geq C\|\tv\|_{\X} \quad~\forall~\tv\in\X_\delta,
\end{align}
where $C$ depends only on $E$ and $\mv$.

Combining the consistency \eqref{app1:cons} and the stability \eqref{app1:stab}, we can apply the inverse function theorem \cite[Lemma 2.2]{2011qnl} to obtain that there exists a solution $\mvp\in\U_\delta$ of \eqref{mep-app1} and
\begin{equation*}
\|{\Pi}_\delta\mv-\mvp\|_{C^1([0,1];\HH)} \leq C \|E_{\delta}-E\|_{C^1(\mathcal{U}_{\epsilon} \cap \HH_\delta)} + C \inf_{\varphi_\delta\in\U_\delta}\|\varphi_\delta-\mv\|_{C^1([0,1];\HH)}.
\end{equation*}
This together with the triangle inequality yields the estimate \eqref{eq:Corollar1}.
\end{proof}

\section{Conclusions}
\label{sec:conclusion}
\setcounter{equation}{0}

This paper provides a stability of the MEP, showing that the perturbation of a curve from the MEP can be controlled by the corresponding force under appropriate norms. 
The stability result has many important consequences on both theoretical and numerical aspects for the MEP.
For example, we show that the MEP stays close to the original one within a small perturbation of the energy landscape or the discretization of the configuration space. 
Our results also provide a crucial foundation for a convergence analysis of the string and NEB methods \cite{dMEPconv}.

\small
\bibliographystyle{plain}
\bibliography{reference}

\begin{thebibliography}{10}

\bibitem{atkinson2005theoretical}
K.~Atkinson and W.~Han.
\newblock {\em Theoretical Numerical Analysis}.
\newblock Springer, 2005.

\bibitem{berglund2013kramers}
N.~Berglund.
\newblock Kramers' law: Validity, derivations and generalisations.
\newblock {\em Markov Process. Relat. Fields}, 19:459--490, 2013.

\bibitem{branduardi2013string}
D.~Branduardi and J.~D. Faraldo-G{\'o}mez.
\newblock String method for calculation of minimum free-energy paths in
  cartesian space in freely tumbling systems.
\newblock {\em J. Chem. Theory Comput.}, 9:4140--4154, 2013.

\bibitem{cameron2011string}
M.~Cameron, R.~Kohn, and E.~Vanden-Eijnden.
\newblock The string method as a dynamical system.
\newblock {\em J. Nonlinear Sci.}, 21:193--230, 2011.

\bibitem{2002string}
W.~E, W.~Ren, and E.~Vanden-Eijnden.
\newblock String method for the study of rare events.
\newblock {\em Phys. Rev. B}, 66:052301, 2002.

\bibitem{2007string}
W.~E, W.~Ren, and E.~Vanden-Eijnden.
\newblock Simplified and improved string method for computing the minimum
  energy paths in barrier-crossing events.
\newblock {\em J. Chem. Phys.}, 126:164103, 2007.

\bibitem{gardner2019energetics}
J.~M. Gardner and C.~F. Abrams.
\newblock Energetics of flap opening in hiv-1 protease: string method
  calculations.
\newblock {\em J. Phys. Chem. B}, 123:9584--9591, 2019.

\bibitem{NEBApp1}
P.~Geslin, R.~Gatti, B.~Devincre, and D.~Rodney.
\newblock Implementation of the nudged elastic band method in a dislocation
  dynamics formalism: Application to dislocation nucleation.
\newblock {\em J. Mech. Phys. Solids}, 108:49--67, 2017.

\bibitem{hanggi1990reaction}
P.~H{\"a}nggi, P.~Talkner, and M.~Borkovec.
\newblock Reaction-rate theory: fifty years after kramers.
\newblock {\em Rev. Modern Phys.}, 62:251--341, 1990.

\bibitem{2002surveyMEP}
G.~Henkelman, G.~J{\'o}hannesson, and H.~J{\'o}nsson.
\newblock Methods for finding saddle points and minimum energy paths.
\newblock In {\em Theoretical Methods in Condensed Phase Chemistry}, pages
  269--302. Springer, 2002.

\bibitem{2000climbingNEB}
G.~Henkelman, B.~P. Uberuaga, and H.~J{\'o}nsson.
\newblock A climbing image nudged elastic band method for finding saddle points
  and minimum energy paths.
\newblock {\em J. Chem. Phys.}, 113:9901--9904, 2000.

\bibitem{jonsson1998NEB}
H.~J{\'o}nsson, G.~Mills, and K.~W. Jacobsen.
\newblock Nudged elastic band method for finding minimum energy paths of
  transitions.
\newblock In {\em Classical and Quantum Dynamics in Condensed Phase
  Simulations}, pages 385--404. Citeseer, 1998.

\bibitem{kato2013perturbation}
T.~Kato.
\newblock {\em Perturbation Theory for Linear Operators}.
\newblock Springer Science \& Business Media, 2013.

\bibitem{2018LuskinStabString}
B.~Koten and M.~Luskin.
\newblock Stability and convergence of the string method for computing minimum
  energy paths.
\newblock {\em Multiscale Model. Simul.}, 17:873--898, 2018.

\bibitem{MEP_MAP}
G.~D. Leines and J.~Rogal.
\newblock Comparison of minimum-action and steepest-descent paths in gradient
  systems.
\newblock {\em Phys. Rev. E}, 93:022307, 2016.

\bibitem{dMEPconv}
X.~Liu, H.~Chen, and C.~Ortner.
\newblock Convergence of the discrete minimum energy path.
\newblock {\em arXiv:2204.07467}, 2022.

\bibitem{parasemigroup}
A.~Lunardi.
\newblock {\em Analytic Semigroups and Optimal Regularity in Parabolic
  Problems}.
\newblock Springer Science \& Business Media, 2012.

\bibitem{2011qnl}
C.~Ortner.
\newblock A priori and a posteriori analysis of the quasinonlocal
  quasicontinuum method in 1d.
\newblock {\em Math. Comp.}, 80:1265–1285, 2011.

\bibitem{ramquet2000critical}
M.~N. Ramquet, G.~Dive, and D.~Dehareng.
\newblock Critical points and reaction paths characterization on a potential
  energy hypersurface.
\newblock {\em J. Chem. Phys.}, 112:4923--4934, 2000.

\bibitem{2013climbingstring}
W.~Ren and E.~Vanden-Eijnden.
\newblock A climbing string method for saddle point search.
\newblock {\em J. Chem. Phys.}, 138:134105, 2013.

\bibitem{vineyard1957frequency}
G.~H. Vineyard.
\newblock Frequency factors and isotope effects in solid state rate processes.
\newblock {\em J. Phys. Chem. Solids}, 3:121--127, 1957.

\bibitem{NEBApp3}
N.~A. Zarkevich and D.~D. Johnson.
\newblock Magneto-structural transformations via a solid-state nudged elastic
  band method: Application to iron under pressure.
\newblock {\em J. Chem. Phys.}, 143:064707, 2015.

\end{thebibliography}

\end{document}